\numberwithin{figure}{section}
\newtheorem{thm}{Theorem}[section]
\newtheorem{lem}{Lemma}[section]
\newtheorem{exm}{Example}[section]
\newtheorem{asm}{Assumption}[section]
\newtheorem{defin}{Definition}[section]
\numberwithin{equation}{section}
\newtheorem{rem}{Remark}[section]
\newtheorem{remark}{Remark}[section]
\numberwithin{table}{section}
\begin{document}
	\begin{frontmatter}
		\title{On convergence of greedy block nonlinear Kaczmarz methods with momentum\footnote{}}
		\date{today}
		\author{Naiyu Jiang}
		\author{Wendi Bao}
		\ead{baowendi@sina.com}
		\author{Lili Xing}
		\author{Weiguo Li}

		\address{College of Science,
			China University of Petroleum, Qingdao 266580, P.R. China}	
		
		\begin{abstract}
			In this paper, for solving nonlinear systems
			 we propose two pseudoinverse-free greedy block methods with momentum by combining the residual-based weighted nonlinear Kaczmarz and heavy ball methods. Without the full column rank assumptions on Jacobi matrices of nonlinear systems, we provide a thorough convergence analysis, and derive upper bounds for the convergence rates of the new methods. Numerical experiments demonstrate that the proposed methods with momentum are much more
			effective than the existing ones.
		\end{abstract}
		\begin{keyword}
			Nonlinear equations; Kaczmarz method; Block method; Heavy ball method
		\end{keyword}
	\end{frontmatter}
	
	\section{Introduction}\label{sec1}
	In this paper, we consider the nonlinear systems of  equations
	\begin{equation}\label{eq1.1}
		f(x)=0,
	\end{equation}
	where $f(x):\mathcal{D}\subseteq R^n\to R^m$ is a continuously differentiable function and $x\in R^n$ is the unknown vector to be solved. $f(x)$ can be rewrite as $[f_1(x),f_2(x),...,f_m(x)]^T$.  There is a solution $x_*$ such that $f(x_*)=0$.
	
	The Kaczmarz method was developed independently by the Polish mathematician Kaczmarz (1937) \cite{k37} and the Italian mathematician Cimmino (1938) \cite{C38}. It was also called the algebraic reconstruction technique (ART) in \cite{GBH70}. It is an iterative method of rows, which is a classical method to solve the large-scale linear system of equations $Ax=b$.
	It only needs to load one row of matrix into memory at a time, which can reduce the storage space for solving the large linear system. Each equation can be viewed as a hyperplane. The current iteration point is projected onto the hyperplane in sequence. In \cite{Strohmer07}, Strohmer and Vershynin explored the expected linear convergence rate of the randomized Kaczmarz algorithm. 	
	Considering the advantages of this method, many experts and scholars have shown great interest in the Kaczmarz method and proposed a series of variants of the Kaczmarz method. For solving linear systems, the greedy methods and block methods \cite{Gower15, Bai18, Niu20,Tan25} are proposed such as the random Kaczmarz (GRK) method \cite{Bai18} and the greedy block Kaczmarz (GBK) method \cite{Niu20}. In \cite{Gower15}, Gower and Richtárik provided the Gaussian Kaczmarz (GK) method defined by
	\begin{equation}\label{1.3}
		x_{k+1}=x_k-\frac{\eta^T(Ax_k-b)}{\|A^T\eta\|_2^{2}}A^T\eta,
	\end{equation}
	where $\eta$ is a Gaussian vector with mean $0\in \mathbb R^m$ and the covariance matrix $I\in
	\mathbb{R}^{m\times m}$, i.e., $\eta\sim N(0,I)$, and $I$ denotes the identity matrix. The GK method can be regarded as a block Kaczmarz method that writes directly the increment in the form of a linear combination of all columns of $A^T$ at each iteration, Recently, in \cite{CH22} Chen and Huang derived a fast deterministic block Kaczmarz (FDBK) method, where a set $U_k$ is first computed according to the greedy index selection strategy \cite{Bai18} and $\eta_k$ is defined by
	\begin{equation}\label{eqet}
		\eta_k=\sum_{i\in U_{k}}(b_i-A_{i,:}x_k)\mu_i.
	\end{equation}
	Its relaxed version was given by \cite{WCZ22}. To accelerate the convergence, Tan, Guo et al. \cite{Tan25}  proposed the adaptive deterministic block Kaczmarz method with momentum that introduces a momentum term to the Gaussian Kaczmarz method and designs a set of block control sequence indexes to select rows. In addition, these methods \cite{Gower15, CH22, Bai18, WCZ22,Tan25} are pseudoinverse-free.	
	
	For the nonlinear system of equations, Wang and Li et al. \cite{Wang21} proposed the nonlinear randomized Kaczmarz (NRK) method, which is obtained by a linear approximation using the Taylor expansion at $x_k$. After the NRK method was proposed, many variants were developed. Zhang and Wang et al. \cite{Zhang23} presented the maximum residual nonlinear Kaczmarz-type (MRNK) method, which is faster than the NK method. Further, Lv and Li et al. \cite{Lv24} used sketch-project technology to propose a class of pseudoinverse-free greedy block nonlinear Kaczmarz methods. The approaches employ the average technique of the Gaussian Kaczmarz method and integrate it with a greedy strategy, which significantly reduces the amount of computation. The sketch matrix used in this method is the Gaussian vector $\sum_{i\in\tau_k}-f_i(x_k)$, where $\tau_k$ is the index set that satisfies the greedy criterion. Ye and Yin \cite{Ye24} developed the residual-based weighted nonlinear Kaczmarz (RBWNK) method. The iteration formula is the same as the methods proposed by Lv. While the RBWNK method has introduced a parameter $q$ into the sketch matrix and is more general. The sketch matrix of methods proposed by Lv is a special case of the RBWNK method. Liu, Li et al. \cite{Liu24} proposed the nonlinear greedy randomized Kaczmarz method with momentum (NGRKm), in which a momentum term is added to NGRK and significantly speeds up the convergence, especially for solving large-scale nonlinear systems of equations. Numerical experiments show that the convergence rate of the NGRKm method is faster than those of the NRK method and the nonlinear randomized Kaczmarz (NURK) method \cite{Wang21}. We note that the convergence analysis of the above methods is mostly obtained under the condition that the Jacobi matrices are full column rank.
	
	Inspired by \cite{Tan25, Ye24, Liu24}, for solving nonlinear systems of equations we develop the residual-based weighted nonlinear Kaczmarz method with momentum (RBWNK-m) and maximum residual residual-based weighted nonlinear Kaczmarz method with momentum (MRWNK-m). By combining the RBWNK method and the heavy ball method, we obtain the two new pseudoinverse-free block methods to accelerate the existing methods. The convergence analysis is explored without the full column rank assumptions on Jacobi matrices. Numerical experiments confirm that the proposed algorithms outperform the existing ones.

	The rest of this paper is organized as follows. In Section \ref{sec2}, we recall the RBWNK method and the heavy ball method. In Section \ref{sec3}, we propose the RBWNK-m method and the MRWNK-m method and analyze their convergence. We perform some numerical experiments to verify the efficiency of the methods proposed in Section \ref{sec4}. In Section \ref{sec5}, we make some conclusions.
	
	\section{Notations and Preliminaries}\label{sec2}
	The notations in this paper are expressed as follows. For any matrix $A\in R^{m\times n}$, $\| \cdot\|$, $\|\cdot\|_q$, $\sigma_{\min}$, $\sigma_{\max}$ denote the Euclidean norm, the $q$ norm, the maximum and minimum nonzero singular values of a matrix $A$, respectively.  $[m]=\{1,2,...,m\}$. And $\tau_k\subseteq[m]$ is the selected index set at the $k$ iteration. $|\tau_k|$ is the cardinal number of the set $\tau_k$.  Let $f^\prime(x)$ be the Jacobi matrix of $f(x)$ and $f^\prime_i(x)$ be the $i$th row of $f^\prime(x)$.
	\subsection{The residual-based weighted nonlinear Kaczmarz method}\label{subsec1}
	In \cite{Ye24}, Ye and Yin proposed the residual-based weighted nonlinear Kaczmarz method for \eqref{eq1.1} as follows. Based on the block Kaczmarz method, they first select an index set $\tau_k$ and then solve the subsystem
	\begin{equation*}
		f^\prime_{\tau_k}(x_k)(x-x_k)=-f_{\tau_k}(x_k),
	\end{equation*}
	where $f_{\tau_k}$ represents the restriction of $f$ on the index set $\tau_k$. Ye and Yin introduced a weighted vector $\eta_k\in R^{|\tau_k|}$, where $\tau_k$ is the index set at each iteration. Then a general weighted hyperplane is given by
	\begin{equation*}
		\eta_k^Tf^\prime_{\tau_k}(x_k)(x-x_k)=- \eta_k^Tf_{\tau_k}(x_k).
	\end{equation*}
	Let the iteration point $x_k$ project onto this new hyperplane, then the new iteration point is given by
	\begin{equation*}
		x_{k+1}=x_k-\frac{ \eta_k^Tf_{\tau_k}(x_k)}{\|(f^\prime_{\tau_k}(x_k))^T \eta_k\|^2}(f^\prime_{\tau_k}(x_k))^T \eta_k.
	\end{equation*}
	To enhance the significance of entries with larger magnitudes in the residual vector $r(x)=-f(x)$, they introduced a parameter $q$ and proposed a novel weighted parameter $\eta_k$ defined as follows:
	\begin{align}\label{eta}
		\eta_k^{(i)}
		&=\begin{cases}
			f_i^{q-1}(x_k),  & q \text{ is even,}\\
			|f_i^{q-2}(x_k)|f_i(x_k),  & q \text{ is odd.}
		\end{cases}
	\end{align}
	The details of the residual-based weighted nonlinear Kaczmarz method are described in Algorithm \ref{alg1} as follows.
	\begin{algorithm}
		\caption{The residual-based weighted nonlinear Kaczmarz (RBWNK) method}\label{alg1}
		\begin{algorithmic}[1]
			\State \textbf{Input:} The initial estimate $x_0\in R^n$, parameter $q$, $\varepsilon$, $K$
			\State Initialized: $k=1$, $r=-f(x_0)$;
			\While{$\|r\|^2\geq\varepsilon$ and $k\leq K$}
			\State Compute $\delta_k=\frac{1}{2}\left(\frac{\mathop{\max}\limits_{i\in[m]}|f_i(x_k)|^2}{\|f(x_k)\|^2}+\frac{1}{m}\right)$
			\State Determine the index set
			$\tau_k=\left\{i||f_i(x_k)|^2\geq \delta_k\|f(x_k)\|^2\right\}$
			\State Compute
			\begin{align*}
				\eta_k^{(i)}
				&=\begin{cases}
					f_i^{q-1}(x_k),  & q \text{ is even}\\
					|f_i^{q-2}(x_k)|f_i(x_k),  & q \text{ is odd}
				\end{cases},\ \ i\in \tau_k
			\end{align*}
			\State Update $x_{k+1}=x_k-\frac{\eta_k^Tf_{\tau_k}(x_k)}{\|(f^\prime_{\tau_k}(x_k))^T\eta_k\|^2}(f^\prime_{\tau_k}(x_k))^T\eta_k$
			\State Compute $r=-f(x_k)$
			\State $k=k+1$;
			\EndWhile
			\State \textbf{Output} $x_{k+1}$.
		\end{algorithmic}
	\end{algorithm}
	
	\begin{rem}\label{remark:1}
		When $q=2$, we have $\eta_k^{(i)}=f_i(x_k)$ and $\eta_k\in R^{|\tau_k|}$ in the RBWNK method. In \cite{Lv24} the iteration of the NGABK method is $ x_{k+1}=x_k-\frac{ \hat{\eta}_k^Tf(x_k)}{\|(f^\prime(x_k))^T\hat{\eta}_k\|^2}(f^\prime(x_k))^T \hat{\eta}_k$ with $\hat{\eta}_k=\sum_{i\in \tau_k}-f_i(x_k)e_i$. Let $I_k=R^{m\times|\tau_k|}$ be the matrix whose columns consist of all the vectors $e_i\in R^m$ with $i\in\tau_k$, it holds that $\eta_k=-I_k^T\hat{\eta}_k$. Then $\eta_k^Tf_{\tau_k}(x_k)=-\hat{\eta}_k^Tf(x_k)$, $f^\prime
		_{\tau_k}(x_k)^T\eta_k=-f^\prime(x_k)^T\hat{\eta}_k$,  and $\|f^\prime(x_k)^T\eta_k\|^2=\|f_{\tau_k}^\prime(x_k)\hat{\eta}_k\|^2$.  Moreover, the greedy criterion of the NGABK method is the same as the RBWNK method. Then through simple mathematical calculations, we find that when $q=2$  the NGABK method and the RBWNK method are equivalent.
	\end{rem}
	
	\subsection{The heavy ball method}\label{subsec2}
	
	For solving the optimization problem $\mathop{\min}\limits_{x\in R^n}f(x)$, the gradient descent (GD) method is a classical method. The iterative format is as follows:
	\begin{equation*}
		x_{k+1}=x_k-\alpha_kf^\prime_k(x_k),
	\end{equation*}
	where $\alpha_k$ is a positive stepsize.  In order to improve the convergence rate of the GD method, Polyak proposed the gradient descent method with momentum (mGD) \cite{Nicolas20} by incorporating a (heavy ball) momentum term $\omega(x_k-x_{k-1})$. The iterative format is as follows:
	\begin{equation*}
		x_{k+1}=x_k-\alpha_kf^\prime_k(x_k)+\omega(x_k-x_{k-1}),
	\end{equation*}
	In both convergence analysis and numerical experiments, the mGD method significantly improves the convergence behaviors.
	
	\section{Main Results}
	\label{sec3}
			\subsection{Greedy block Kaczmarz methods with momentum}	
	
	 Firstly, by combining the heavy ball method and the RBWNK method, we establish the RBWNK methods with momentum for solving \eqref{eq1.1} with the following iteration scheme
	\begin{equation}\label{eq:diedai}
		x_{k+1}=x_k-\frac{\eta_k^Tf_{\tau_k}(x_k)}{\|(f^\prime_{\tau_k}(x_k))^T\eta_k\|^2}(f^\prime_{\tau_k}(x_k))^T\eta_k+\omega(x_k-x_{k-1}),
	\end{equation}
	where $\eta_k\in R^{|\tau_k|} $ satisfied (\ref{eta}) and  $\omega$ is the momentum parameter. To accelerate the convergence, we let larger entries of the residual vector $r=-f(x_k)$ be selected to participate in each iteration as far as possible. Here we use two greedy criterion to choose the row index. The first type of index set is
	\begin{align*}
		&\delta_k=\frac{1}{2}\left(\frac{\mathop{\max}\limits_{i\in[m]}|f_i(x_k)|^2}{\|f(x_k)\|^2}+\frac{1}{m}\right),\\
		&\tau_k=\left\{i||f_i(x_k)|^2\geq \delta_k\|f(x_k)\|^2\right.\}
	\end{align*}
	For this index set, we assume that the norm squared of $f_i^\prime
	(x_k)$ is approximately $\frac{1}{m}$ times as $f^\prime(x_k)$. Thus we don't need to calculate the entire Jacobi matrix, which can lead to a large amount of computation. Using this greedy criterion,
	the RBWNK method with momentum (RBWNK-m) is described in Algorithm \ref{alg2}.
	\begin{algorithm}
		\caption{The residual-based weighted nonlinear Kaczmarz with momentum (RBWNK-m) method}\label{alg2}
		\begin{algorithmic}[1]
			\State \textbf{Input:} The initial estimate $x_0\in R^n$, parameter $q$, $\varepsilon$, $K$, momentum parameter $\omega$
			\State Initialized: $k=1$, $r=-f(x_0)$, $x_1=x_0$;
			\While{$\|r\|^2\geq\varepsilon$ and $k\leq K$}
			\State Compute $\delta_k=\frac{1}{2}\left(\frac{\mathop{\max}\limits_{i\in[m]}|f_i(x_k)|^2}{\|f(x_k)\|^2}+\frac{1}{m}\right)$
			\State Determine the index set
			$\tau_k=\left\{i||f_i(x_k)|^2\geq \delta_k\|f(x_k)\|^2\right\}$
			\State Compute
			\begin{align*}
				\eta_k^{(i)}
				&=\begin{cases}
					f_i^{q-1}(x_k),  & q \text{ is even}\\
					|f_i^{q-2}(x_k)|f_i(x_k),  & q \text{ is odd}
				\end{cases},\ \ i\in \tau_k
			\end{align*}
			\State Update $x_{k+1}=x_k-\frac{\eta_k^Tf_{\tau_k}(x_k)}{\|(f^\prime_{\tau_k}(x_k))^T\eta_k\|^2}(f^\prime_{\tau_k}(x_k))^T\eta_k+\omega(x_k-x_{k-1})$
			\State Compute $r=-f(x_k)$
			\State $k=k+1$;
			\EndWhile
			\State \textbf{Output} $x_{k+1}$.
		\end{algorithmic}
	\end{algorithm}
	According to the maximum residual rule, we have the second selection for the index set
	\begin{equation*}
		\tau_k=\left\{i||f_i(x_k)|^2\geq \rho\mathop{\max}\limits_{i\in[m]}|f_i(x_k)|^2\right\},
	\end{equation*}
	where $\rho$ is a relaxed parameter. Then based on the maximum residual, we obtain the residual-based weighted nonlinear Kaczmarz method with momentum (MRWNK-m) in Algorithm \ref{alg3}. It is clear that they are a class of pseudoinverse-free greedy block Kaczmarz methods.
	\begin{algorithm}
		\caption{The maximum residual residual-based weighted nonlinear Kaczmarz with momentum (MRWNK-m) method}\label{alg3}
		\begin{algorithmic}[1]
			\State \textbf{Input:} The initial estimate $x_0\in R^n$, parameter $q$, $\varepsilon$, $K$, momentum parameter $\omega$
			\State Initialized: $k=1$, $r=-f(x_0)$, $x_1=x_0$;
			\While{$\|r\|^2\geq\varepsilon$ and $k\leq K$}
			\State Determine the index set
			$\tau_k=\left\{i||f_i(x_k)|^2\geq \rho\mathop{\max}\limits_{i\in[m]}|f_i(x_k)|^2\right\}$
			\State Compute
			\begin{align*}
				\eta_k^{(i)}
				&=\begin{cases}
					f_i^{q-1}(x_k),  & q \text{ is even}\\
					|f_i^{q-2}(x_k)|f_i(x_k),  & q \text{ is odd}
				\end{cases},\ \ i\in \tau_k
			\end{align*}
			\State Update $x_{k+1}=x_k-\frac{\eta_k^Tf_{\tau_k}(x_k)}{\|(f^\prime_{\tau_k}(x_k))^T\eta_k\|^2}(f^\prime_{\tau_k}(x_k))^T\eta_k+\omega(x_k-x_{k-1})$
			\State Compute $r=-f(x_k)$
			\State $k=k+1$;
			\EndWhile
			\State \textbf{Output} $x_{k+1}$.
		\end{algorithmic}
	\end{algorithm}
		\subsection{Convergence analysis}
	Now, we will explore the convergence of the two new methods proposed.
	
	\begin{defin}\label{de3.1}
		A continuously differentiable nonlinear function $f(x):\mathcal{D}\subseteq R^n\to R^m$ satisfies the local tangential cone condition in $\mathcal{D}$ if for every $i\in [m]$ and $\forall x_1, x_2\in \mathcal{D}$, there exists $\xi_i\in[0,\xi)$ satisfying $\xi=\max_{i\in[m]}\xi_i\leq\frac{1}{2}$ such that
			\begin{equation}\label{qiexiangzhui}
				|f_i(x_1)-f_i(x_2)-f^\prime_i(x_1)(x_1-x_2)|\leq\xi_i|f_i(x_1)-f_i(x_2)| \quad1\leq i\leq m.
		\end{equation}
	\end{defin}
	
	\begin{lem}
		{\rm\cite{Xing25}} If $f(x):\mathcal{D}\subseteq R^n\to R^m$ satisfies the local tangential cone condition in $\mathcal{D}$ and $x_*$ is a solution of (\ref{eq1.1}) in $\mathcal{D}$. Then the following statements hold.
		\begin{itemize}
			\setlength{\itemsep}{0.1em}
		\setlength{\parskip}{0.1em}
			\item [(i)] Any other solution $\bar{x}_*$ in $\mathcal{D}$ satisfies
			\begin{equation}\label{x*-bar(x)*}
				x_*-\bar{x}_*\in\mathcal{N}(f^\prime(x_*)),
			\end{equation}
			where $\mathcal{N}(f^\prime(x_*))$ denotes the null space of the matrix $f^\prime(x_*)$.
			\item [(ii)] Let $x^\dagger$ denotes the solution of minimal distance to $x_0$, then $x^\dagger$ is unique and satisfies
			\begin{equation}\label{x+-x0}
				x^\dagger-x_0\in \mathcal{R}(f^\prime(x^\dagger)^T),
			\end{equation}
			where $\mathcal{R}(f^\prime(x^\dagger)^T)$ denotes the column space of matrix $f^\prime(x^\dagger)^T$, which is the orthogonal complement subspace of $\mathcal{N}(f^\prime(x^\dagger)).$
		\end{itemize}
			\end{lem}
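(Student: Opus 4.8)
\emph{Proof plan.} Both parts are obtained by applying the local tangential cone condition \eqref{qiexiangzhui} with one of its two arguments chosen to be a solution of \eqref{eq1.1}. For (i) I would invoke \eqref{qiexiangzhui} with $x_1=x_*$ and $x_2=\bar x_*$: since $f(x_*)=f(\bar x_*)=0$ the right-hand side vanishes, which forces $f^\prime_i(x_*)(x_*-\bar x_*)=0$ for every $i\in[m]$; stacking these $m$ scalar identities gives $f^\prime(x_*)(x_*-\bar x_*)=0$, i.e. $x_*-\bar x_*\in\mathcal N(f^\prime(x_*))$.

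For (ii) the first step is to observe that the zero set of $f$ is locally flat along the null space of $f^\prime(x^\dagger)$: if $v\in\mathcal N(f^\prime(x^\dagger))$ and $t$ is small enough that $x^\dagger-tv\in\mathcal D$, then applying \eqref{qiexiangzhui} with $x_1=x^\dagger$ and $x_2=x^\dagger-tv$ annihilates the term $f^\prime_i(x^\dagger)(tv)=0$ and leaves $|f_i(x^\dagger-tv)|\le\xi_i|f_i(x^\dagger-tv)|$ for all $i$; since $\xi_i\le\xi\le\tfrac12<1$ this forces $f(x^\dagger-tv)=0$, so the short segment $t\mapsto x^\dagger-tv$ consists of solutions. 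Granting this, the characterization \eqref{x+-x0} is exactly the first-order optimality condition for the minimal-distance solution: for a fixed $v\in\mathcal N(f^\prime(x^\dagger))$ the map $\phi(t)=\|x^\dagger-tv-x_0\|^2=\|x^\dagger-x_0\|^2-2t\langle x^\dagger-x_0,v\rangle+t^2\|v\|^2$ attains its minimum at $t=0$, hence $\phi^\prime(0)=-2\langle x^\dagger-x_0,v\rangle=0$; letting $v$ range over $\mathcal N(f^\prime(x^\dagger))$ this says $x^\dagger-x_0\perp\mathcal N(f^\prime(x^\dagger))$, i.e. $x^\dagger-x_0\in\mathcal R(f^\prime(x^\dagger)^T)$.

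Uniqueness then comes from the Pythagorean identity: if $\tilde x$ is another solution with $\|\tilde x-x_0\|=\|x^\dagger-x_0\|$, part (i) (applied with $x_*=x^\dagger$) gives $x^\dagger-\tilde x\in\mathcal N(f^\prime(x^\dagger))$, while the previous step gives $x^\dagger-x_0\in\mathcal N(f^\prime(x^\dagger))^{\perp}$; the two displacements are orthogonal, so $\|\tilde x-x_0\|^2=\|\tilde x-x^\dagger\|^2+\|x^\dagger-x_0\|^2$ and the equal norms force $\tilde x=x^\dagger$. If existence of the minimal-distance solution must itself be argued, it follows from the same flatness step together with (i), which show that near $x^\dagger$ the solution set agrees with the affine subspace $x^\dagger+\mathcal N(f^\prime(x^\dagger))$, onto which $x_0$ has a unique metric projection. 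The one point requiring care — and, I expect, the only real obstacle — is the interplay with the domain: the flatness step only yields solutions $x^\dagger-tv$ that remain in $\mathcal D$, so one should either assume $\mathcal D$ convex (or $\mathcal D=R^n$) with $x^\dagger$ interior, or carry out the whole argument in a neighbourhood of $x^\dagger$; modulo that, each step is a one-line consequence of \eqref{qiexiangzhui}.
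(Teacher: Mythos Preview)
Your argument for (i) is correct and coincides with the paper's, though you state it more directly: the paper first derives the two-sided estimate $(1-\xi)\|f(x_1)-f(x_2)\|\le\|f'(x_1)(x_1-x_2)\|\le(1+\xi)\|f(x_1)-f(x_2)\|$ from \eqref{qiexiangzhui} and then reads off the equivalence $f(x_*)=f(\bar x_*)=0\Leftrightarrow x_*-\bar x_*\in\mathcal N(f'(x_*))$, whereas you go straight to the component-wise inequality with both sides zero.

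For (ii) the two proofs diverge in execution. The paper argues \emph{globally}: it writes the orthogonal decomposition $x^\dagger-x_0=y+(\tilde x-x_0)$ with $y\in\mathcal N(f'(x^\dagger))$ and $\tilde x-x_0\in\mathcal N(f'(x^\dagger))^\perp$, uses the two-sided estimate above (the ``if'' direction of the equivalence) to conclude that $\tilde x=x^\dagger-y$ is itself a solution, and then invokes Pythagoras, $\|x^\dagger-x_0\|^2=\|y\|^2+\|\tilde x-x_0\|^2$, to force $y=0$ by minimality. Your route is the \emph{local/variational} analogue of the same idea: you only perturb $x^\dagger$ by $tv$ for small $t$, verify flatness along $\mathcal N(f'(x^\dagger))$ there, and extract the orthogonality from the first-order condition $\phi'(0)=0$. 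Both are valid and your uniqueness step via Pythagoras matches the paper's in substance. The paper's decomposition is slightly slicker when it applies, but it tacitly requires the competitor $\tilde x$ to lie in $\mathcal D$ --- a point the paper does not address --- whereas your infinitesimal version only needs $x^\dagger$ to be interior to $\mathcal D$, precisely the caveat you flag at the end.
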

\begin{proof}
		\begin{itemize}
	\item [(i)] From  Definition \ref{de3.1} and the triangle inequality, it holds
	\begin{align*}
		\|f(x_1)-f(x_2))\|_{2}
		& = \sum\limits_{i=1}^m |f_{i}(x_1)-f_{i}(x_2)|\\
		& \ge  \sum\limits_{i=1}^m \frac{1}{\xi_i} |f_i(x_1)-f_i(x_2)-\nabla  f_i(x_1)^T({{x}_1}-{x}_2)|\\
		& \ge \frac{1}{\xi}\left(\|{f}'({x}_1)({x}_1 - {x}_2)\|_2 - \|{f}({x}_1)-{f}({x}_2)\|_{2}\right) \\
		&\left(\ \text{or}\  \ge  \frac{1}{\xi}\left(\|{f}({x}_1)-{f}({x}_1)\|_{2}-\|{f}'(x_1)({x}_1 - {x}_2)\|_2\right) \right).
	\end{align*}
	Thus we get $$ (1-\xi ) \|{f}({x}_1)-{f}({x}_2)\|_{2} \le  \|{f}'({x}_1)({x}_1 - {x}_2)\|_2 \le (1+\xi) \|{f}({x}_1)-{f}({x}_2)\|_{2},$$
	It follows immediately from the above inequality that
$$\frac{1}{1+\xi } \|{f}'({x}_*)({x}_* - \bar{x}_{*})\|_2 \le \|{f}({x}_*)-{f}(\bar{x}_{*})\|_2 \le \frac{1}{1-\xi} \|{f}'({x}_*)({x}_* - \bar{x}_{*})\|_2 $$
hold for ${x}_*, \bar{x}_{*} \in \mathcal{D}$. Obviously,  $ {f}({x}_*)={f}(\bar{x}_{*}) =0$ if and only if  $ {x}_{*}-\bar{x}_{*} \in \mathcal{N}({f}'({x}_{*}))$.\\

	\item [(ii)] Asumme that $x^\dag=s^\dag+u^\dag,\ {x}_{0}=s_{0}+u_{0}$ satisfy
$s^\dag, s_{0}\in \mathcal{N}({f}'({x}^\dag)),\  u^\dag, u_{0}\in \mathcal{N}({f}'({x}^\dag))^\perp$.  Let ${x}^\dag -{x}_{0} = {y} +\tilde{{x}} -{x}_{0}  $ with $y=s^\dag-s_{0},\ \tilde{{x}}=u^\dag+ s_{0}$. It is easy to see that ${y}\in \mathcal{N}({f}'({x}^\dag))$ and $\tilde{{x}} - {x}_{0}= u^\dag- u_{0}\in \mathcal{N}({f}'({x}^\dag))^\perp = \mathcal{R}({f}'({x}^\dag)^T)$. Consequently, it follows  that $\|{x}^\dag-{x}_{0}\|_2^2 =\|{y}\|_2^2 +\|\tilde{{x}}-{x}_{0}\|_2^2 $. On the other hand, $\tilde{{x}} = {x}^\dag- {y} $ is a solution of \eqref{eq1.1}) from (\ref{x*-bar(x)*}).  Therefore, $ {x}^\dag $ is the   solution of  minimal distance to ${x}_{0}$ if and only if ${y}={0}$. Furthermore, ${x}^\dag -{x}_{0}=\tilde{{x}} - {x}_{0}\in \mathcal{R}({f}'({x}^\dag)^T) $.
	\end{itemize}
\end{proof}
	\begin{asm}\label{asm1}
		We assume that the following conditions hold:
			\begin{itemize}
					\setlength{\itemsep}{0.1em}
				\setlength{\parskip}{0.1em}
			\item [(i)] The system of nonlinear equation $f(x):\mathcal{D}\subseteq R^n\to R^m$ is derivable in a bounded closed $\mathcal{D}$. And the derivative of $f(x)$ is continuous in $\mathcal{D}$.
		\item [(ii)]  The system of nonlinear equation $f(x)$ satisfies the local tangential cone condition in $\mathcal{D}$.
		\item [(iii)] 
		There exists a $x^\dagger\in \mathcal{D}$ such that $f(x^\dagger)=0$.
	\item [(iv)]  $\mathcal{N}(f^\prime(x^\dagger))\subseteq \mathcal{N}(f^\prime(x))$ for all $x\in\mathcal{D}$.		
					\end{itemize}
	\end{asm}
	\begin{lem}
		Suppose that Assumption \ref{asm1} exists, and the sequence $\{x_k\}$ is generated by the iteration scheme \eqref{eq:diedai}. Then the following statements hold.
			\begin{itemize}
					\setlength{\itemsep}{0.1em}
				\setlength{\parskip}{0.1em}
		\item[(i)]\label{Algorithm not break down} The term  $\|(f^\prime_{\tau_k}(x_k))^T\eta_k\|^2$ in the iterative formula is nonzero, i.e., Algorithm \ref{alg2} and Algorithm \ref{alg3} will not break down.
			\item[(ii)] \label{eq:x_k-b-x+} For $\eta_k$ defined by (\ref{eta}), the following inequality exists
		\begin{equation*}
			\|x_k-\frac{\eta_k^Tf_{\tau_k}(x_k)}{\|f^{\prime}_{\tau_k}(x_k)^T\eta_k\|^2}f^{\prime}_{\tau_k}(x_k)^T\eta_k-x^\dagger\|^2\leq\|x_k-x^\dagger\|^2-(1-2\xi)\frac{|\eta_k^Tf_{\tau_k}(x_k)|^2}{\|f^{\prime}_{\tau_k}(x_k)^T\eta_k\|^2}.
		\end{equation*}
			\item[(iii)] \label{xk-x+ in R(f'(x+))}$x_k-x^\dagger\in R(f^\prime(x^\dagger)^T)$.
			\end{itemize}
		\begin{proof}
			\begin{itemize}
					\setlength{\itemsep}{0.1em}
				\setlength{\parskip}{0.1em}
		\item[(i)] The proof is analogous to the proof of Theorem 1 in \cite{Ye24}, so we omit it.
		\item[(ii)] Note that compared with the iterative formula of RBWNK, the iterative formula \eqref{eq:diedai} has an extra term $\omega(x_k-x_{k-1})$. From Lemma 3.2 in \cite{Ye24} we can easily obtain the conclusion.
		\item[(iii)] From Assumption \ref{asm1} (iv), we have $\mathcal{N}(f^\prime(x^\dagger))\subseteq\mathcal{N}(f^\prime(x_k))$ for all $k=1,2,...$, i.e.,
		\begin{equation}\label{fk}
			R(f^\prime(x_k)^T)\subseteq R(f^\prime(x^\dagger)^T), \ \text{for}\ k=1,2,....
		\end{equation}
		By (\ref{x+-x0}), we can obtain $$x_1-x^\dagger=x_0-x^\dagger\in R(f^\prime(x^\dagger)^T).$$
		Since $x_1=x_0\in\mathcal{D}$ and Assumption \ref{asm1} (iv), from the above relation it results in
		\begin{equation}\label{x2-x+}
			x_2-x^\dagger=x_2-x_1+x_1-x^\dagger\in R(f^\prime(x_1)^T)\cup R(f^\prime(x^\dagger)^T)\subseteq R(f^\prime(x^\dagger)^T).
		\end{equation}
		
		From the iteration scheme \eqref{eq:diedai}, and the relations \eqref{fk} and \eqref{x2-x+}, it follows that
		\begin{equation}
		 x_3-x^\dagger=x_3-x_2+x_2-x^\dagger\in  R(f^\prime(x_2)^T)\cup R(f^\prime(x_1)^T) \cup R(f^\prime(x^\dagger)^T)\subseteq R(f^\prime(x^\dagger)^T),
		 \end{equation} $$x_4-x^\dagger=x_4-x_3+x_3-x^\dagger\in R(f^\prime(x_3)^T)\cup R(f^\prime(x_2)^T)\cup R(f^\prime(x^\dagger)^T)\subseteq R(f^\prime(x^\dagger)^T),$$
	 $$ ...$$
	   $$x_k-x^\dagger=x_k-x_{k-1}+x_{k-1}-x^\dagger\in  R(f^\prime(x_{k-1})^T)\cup R(f^\prime(x_{k-2})^T)\cup R(f^\prime(x^\dagger)^T)\subseteq R(f^\prime(x^\dagger)^T).$$ Thus,
	   the conclusion holds.
		\end{itemize}
		\end{proof}
	\end{lem}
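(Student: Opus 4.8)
The plan is to dispatch the three parts in the stated order; parts (i) and (ii) will share a single algebraic estimate for the weighted residual, while (iii) is an induction that must absorb the extra momentum term. \emph{For part (i)}, I would argue by contradiction. So long as the stopping test has not been met we have $\|f(x_k)\|>0$, and for either greedy rule the resulting set $\tau_k$ is nonempty with $f_i(x_k)\neq0$ for all $i\in\tau_k$, hence $\eta_k\neq0$. The crucial structural fact is that, whatever the parity of $q$, the definition \eqref{eta} gives $\eta_k^{(i)}f_i(x_k)=|f_i(x_k)|^q\ge0$, so that $\eta_k^Tf_{\tau_k}(x_k)=\sum_{i\in\tau_k}|f_i(x_k)|^q>0$. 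Applying the local tangential cone condition \eqref{qiexiangzhui} with the pair $(x_k,x^\dagger)$ and using $f(x^\dagger)=0$ yields $f'_i(x_k)(x_k-x^\dagger)=(1-t_i)f_i(x_k)$ with $|t_i|\le\xi_i\le\xi$ for each $i\in\tau_k$; multiplying by $\eta_k^{(i)}$ and summing gives
$$\eta_k^Tf'_{\tau_k}(x_k)(x_k-x^\dagger)\ \ge\ (1-\xi)\,\eta_k^Tf_{\tau_k}(x_k)\ >\ 0 ,$$
which is impossible if $(f'_{\tau_k}(x_k))^T\eta_k=0$. Hence the denominator stays positive and neither algorithm breaks down.

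\emph{For part (ii)}, write $z_k=x_k-\alpha_k v_k$ with $v_k=(f'_{\tau_k}(x_k))^T\eta_k$ and $\alpha_k=\eta_k^Tf_{\tau_k}(x_k)/\|v_k\|^2$ (meaningful by part (i)). Expanding the square,
$$\|z_k-x^\dagger\|^2=\|x_k-x^\dagger\|^2-2\alpha_k\,\eta_k^Tf'_{\tau_k}(x_k)(x_k-x^\dagger)+\alpha_k^2\|v_k\|^2 ,$$
I would substitute the lower bound $\eta_k^Tf'_{\tau_k}(x_k)(x_k-x^\dagger)\ge(1-\xi)\eta_k^Tf_{\tau_k}(x_k)$ just obtained, together with the identity $\alpha_k^2\|v_k\|^2=\alpha_k\eta_k^Tf_{\tau_k}(x_k)=|\eta_k^Tf_{\tau_k}(x_k)|^2/\|v_k\|^2$. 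Collecting the coefficients of $|\eta_k^Tf_{\tau_k}(x_k)|^2/\|v_k\|^2$ yields exactly the factor $-(1-2\xi)$, which is the claimed inequality; the hypothesis $\xi\le\frac12$ makes the correction term nonnegative. This is Lemma 3.2 of \cite{Ye24}, the point being that the momentum term does not enter $z_k$ and so does not affect this projection estimate.

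\emph{For part (iii)}, I would use induction on $k$, and the momentum term is the only thing needing care. By Assumption \ref{asm1}(iv), $\mathcal{N}(f'(x^\dagger))\subseteq\mathcal{N}(f'(x))$, i.e. $\mathcal{R}(f'(x)^T)\subseteq\mathcal{R}(f'(x^\dagger)^T)$, for every $x\in\mathcal{D}$; in particular $v_k=\sum_{i\in\tau_k}\eta_k^{(i)}f'_i(x_k)^T\in\mathcal{R}(f'(x_k)^T)\subseteq\mathcal{R}(f'(x^\dagger)^T)$. The base case follows from $x_1=x_0$ and \eqref{x+-x0}: $x_1-x^\dagger=x_0-x^\dagger\in\mathcal{R}(f'(x^\dagger)^T)$. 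For the inductive step, assuming $x_j-x^\dagger\in\mathcal{R}(f'(x^\dagger)^T)$ for all $j\le k$, rewrite \eqref{eq:diedai} as
$$x_{k+1}-x^\dagger=(x_k-x^\dagger)-\alpha_kv_k+\omega\bigl((x_k-x^\dagger)-(x_{k-1}-x^\dagger)\bigr),$$
where each of the three summands lies in the subspace $\mathcal{R}(f'(x^\dagger)^T)$, so their combination does too. I expect the only genuinely delicate point — here and throughout the convergence analysis — to be guaranteeing that all iterates remain in $\mathcal{D}$, so that Assumption \ref{asm1}(iv) is legitimately applicable at $x_k$ and $x_{k-1}$; this is taken as part of the standing hypotheses (or is recovered a posteriori once a descent estimate for the full momentum scheme is available), and once it holds the rest is a routine subspace-closure induction.
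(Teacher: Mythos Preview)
Your proposal is correct and follows essentially the same approach as the paper: parts (i) and (ii) flesh out precisely the tangential-cone computation the paper defers to \cite{Ye24}, and your induction for (iii) is the same subspace-closure argument the paper carries out by listing successive iterates (your decomposition $x_{k+1}-x^\dagger=(x_k-x^\dagger)-\alpha_kv_k+\omega((x_k-x^\dagger)-(x_{k-1}-x^\dagger))$ is in fact cleaner than the paper's bookkeeping). Your explicit flag about iterates remaining in $\mathcal{D}$ is a point the paper leaves implicit.
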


	\begin{lem}
		\label{1+xi^2}
		{\rm\cite{Zhang24}}
		If the nonlinear function f satisfies the local tangential cone condition in $\mathcal{D}$, then for $\forall x_1, x_2 \in D$  and an index subset $\tau\subseteq [m]$, we have
		\begin{equation}\label{eq:block ltcc}
			\|f_\tau(x_1)-f_\tau(x_2)\|^2\geq\frac{1}{(1+\xi)^2}\|f_\tau^\prime(x_1)(x_1-x_2)\|^2.
		\end{equation}
	\end{lem}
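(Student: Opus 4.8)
The plan is to obtain the inequality entrywise from the local tangential cone condition (Definition \ref{de3.1}) and then aggregate over the block $\tau$. First I would fix $x_1,x_2\in\mathcal{D}$ and an index $i\in[m]$, and use \eqref{qiexiangzhui} together with the triangle inequality in the form $|a|\le|a-b|+|b|$, taking $a=f_i'(x_1)(x_1-x_2)$ and $b=f_i(x_1)-f_i(x_2)$. This gives
\begin{equation*}
	|f_i'(x_1)(x_1-x_2)|\le|f_i(x_1)-f_i(x_2)|+\xi_i|f_i(x_1)-f_i(x_2)|=(1+\xi_i)|f_i(x_1)-f_i(x_2)|\le(1+\xi)|f_i(x_1)-f_i(x_2)|,
\end{equation*}
where the last step bounds $\xi_i$ by $\xi=\max_{i\in[m]}\xi_i$.

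Next I would square this scalar estimate and sum over $i\in\tau$. Since the $i$-th entry of the subvector $f_\tau'(x_1)(x_1-x_2)\in R^{|\tau|}$ is exactly $f_i'(x_1)(x_1-x_2)$ and the $i$-th entry of $f_\tau(x_1)-f_\tau(x_2)$ is $f_i(x_1)-f_i(x_2)$, the Euclidean norm of each subvector splits as the sum of its squared entries over $\tau$, so the entrywise bounds aggregate to
\begin{equation*}
	\|f_\tau'(x_1)(x_1-x_2)\|^2=\sum_{i\in\tau}|f_i'(x_1)(x_1-x_2)|^2\le(1+\xi)^2\sum_{i\in\tau}|f_i(x_1)-f_i(x_2)|^2=(1+\xi)^2\|f_\tau(x_1)-f_\tau(x_2)\|^2.
\end{equation*}
Dividing by $(1+\xi)^2>0$ yields \eqref{eq:block ltcc}.

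There is no real obstacle here; the argument is a componentwise application of the tangential cone condition combined with the Pythagorean decomposition of the Euclidean norm restricted to the block. The only points needing a little care are the uniform bound $\xi_i\le\xi$ (legitimate because $\xi$ is the maximum of the $\xi_i$) and the observation that, unlike item (i) of the preceding lemma, here only the one-sided estimate $\|f_\tau'(x_1)(x_1-x_2)\|\le(1+\xi)\|f_\tau(x_1)-f_\tau(x_2)\|$ is required, so the reverse branch of the triangle inequality never enters.
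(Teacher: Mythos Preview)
Your proof is correct. The paper does not give its own proof of this lemma---it is quoted from \cite{Zhang24}---so there is no in-paper argument to compare against directly; however, the paper's proof of the preceding lemma (item (i)) derives the analogous two-sided bound $(1-\xi)\|f(x_1)-f(x_2)\|\le\|f'(x_1)(x_1-x_2)\|\le(1+\xi)\|f(x_1)-f(x_2)\|$ for the full index set by exactly the same mechanism (componentwise tangential cone condition plus triangle inequality, then aggregate), so your approach is fully in line with the paper's style.
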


	\begin{lem}{\rm \cite{Nicolas20}}
		\label{a_1,a_2}
		Fix $F_1=F_0\geq0$, and let $\{F_k\}_{k\geq0}$ be a sequence of nonnegative real numbers satisfying the relation
		\begin{equation*}
			F_{k+1}\leq a_1F_k+a_2F_{k-1},k\geq 1,
		\end{equation*}
		where $a_2\geq 0$, $a_1+a_2< 1$. Then the sequence satisfies the relation
		\begin{equation*}
			F_{k+1}\leq p^k(1+\gamma)F_0,k\geq 1,
		\end{equation*}
		where $p=\frac{a_1+\sqrt{a_1^2+4a_2}}{2}$, $\gamma=p-a_1$. Moreover, $a_1+a_2\leq p<1$ with equality if and only if  $a_2=0$.
	\end{lem}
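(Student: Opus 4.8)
The plan is to recognize the constant $p$ as the larger root of the characteristic polynomial $g(t)=t^2-a_1t-a_2$ of the underlying second-order linear recurrence, so that $p^2=a_1p+a_2$; from $\sqrt{a_1^2+4a_2}\ge|a_1|$ we also get $p\ge 0$. The two identities I would use repeatedly are $a_1+\gamma=p$ (immediate from $\gamma=p-a_1$) and $p\gamma=a_2$ (since $p\gamma=p(p-a_1)=p^2-a_1p=a_2$).

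The core step is to collapse the second-order inequality into a first-order one by introducing the auxiliary sequence $G_k:=F_{k+1}+\gamma F_k$. Granting for the moment that $\gamma\ge 0$ (justified below), each $G_k$ is nonnegative, and for $k\ge 1$ I would bound
\[
G_k=F_{k+1}+\gamma F_k\le a_1F_k+a_2F_{k-1}+\gamma F_k=(a_1+\gamma)F_k+(p\gamma)F_{k-1}=p\bigl(F_k+\gamma F_{k-1}\bigr)=pG_{k-1}.
\]
Since $p\ge 0$, iterating gives $G_k\le p^k G_0$, and because $F_1=F_0$ one has $G_0=F_1+\gamma F_0=(1+\gamma)F_0$. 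Finally $\gamma F_k\ge 0$ yields $F_{k+1}\le G_k\le p^k(1+\gamma)F_0$, which is exactly the claimed estimate.

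It remains to verify the elementary facts $\gamma\ge 0$ and $a_1+a_2\le p<1$ (with equality iff $a_2=0$), which I would obtain from the shape of the parabola $g$. For $p<1$: since $a_2\ge 0$ and $a_1+a_2<1$ force $a_1<2$, the inequality $p<1$ is, after squaring the two nonnegative sides of $\sqrt{a_1^2+4a_2}<2-a_1$, equivalent to $4a_2<4-4a_1$, i.e.\ to $a_1+a_2<1$, which holds by hypothesis. For $a_1+a_2\le p$: expanding gives the identity $g(a_1+a_2)=a_2(a_1+a_2-1)\le 0=g(p)$, and since $g$ is increasing on the half-line containing both $a_1+a_2$ and $p$, monotonicity gives $a_1+a_2\le p$, with equality precisely when $g(a_1+a_2)=0$, i.e.\ when $a_2=0$ (in which case $p=a_1$). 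In particular $\gamma=p-a_1\ge a_2\ge 0$, which closes the loop. I do not anticipate a genuine difficulty: the only points needing care are ensuring every squaring is performed between nonnegative quantities and correctly tracking the equality case in the quadratic comparison.
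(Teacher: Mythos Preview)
The paper does not prove this lemma; it is quoted from \cite{Nicolas20} and used as a black box in the proofs of Theorems~\ref{thm1} and~\ref{thm2}. Your argument is the standard one and is correct: factor the characteristic polynomial as $t^{2}-a_{1}t-a_{2}=(t-p)(t+\gamma)$, so that $a_{1}+\gamma=p$ and $p\gamma=a_{2}$, then collapse the second-order inequality into $G_{k}\le pG_{k-1}$ for $G_{k}=F_{k+1}+\gamma F_{k}$ and iterate.

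One small imprecision worth fixing: in establishing $a_{1}+a_{2}\le p$ you assert that $g$ is increasing ``on the half-line containing both $a_{1}+a_{2}$ and $p$''. Certainly $p\ge a_{1}/2$, but $a_{1}+a_{2}\ge a_{1}/2$ is equivalent to $a_{1}+2a_{2}\ge 0$, which the stated hypotheses do not guarantee (nothing prevents $a_{1}$ from being very negative). The clean repair is to observe that since the parabola $g$ opens upward with roots $-\gamma\le p$, the already-computed inequality $g(a_{1}+a_{2})=a_{2}(a_{1}+a_{2}-1)\le 0$ forces $-\gamma\le a_{1}+a_{2}\le p$ directly, with equality iff $a_{2}=0$. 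Incidentally, $\gamma=\tfrac{1}{2}\bigl(\sqrt{a_{1}^{2}+4a_{2}}-a_{1}\bigr)\ge 0$ follows at once from $\sqrt{a_{1}^{2}+4a_{2}}\ge|a_{1}|\ge a_{1}$, so there is no need to postpone this to ``close the loop'' at the end.
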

	\begin{lem}\label{lem}
		Suppose that Assumption \ref{asm1} holds. Let $\alpha=
		\underset{1\leq i\leq \tau}{max}\underset{x\in  \mathcal{D}}{sup}\|f'_{i}(x)\|^2.$ Then the sequence $\{x_k\}$ generated by the iterative scheme \eqref{eq:diedai} satisfies the following inequality:
		\begin{align}\label{lem1_eq1}
			\|x_{k+1}-x^\dagger\|^2
			&\leq(1+3\omega+2\omega^2)\|x_k-x^\dagger\|^2+(\omega+2\omega^2)\|x_{k-1}-x^\dagger\|^2\nonumber\\
			& -(1-2\xi+3\omega-4\omega\xi)\frac{|\tau_k|^{1-q}}{\alpha}\|f_{\tau_k}(x_k)\|_2^2.
		\end{align}
		where $|\tau_k|$ is the cardinality of the selected block indices $\tau_k$ and $\omega\in[0,1)$ is the momentum parameter.
		\begin{proof}
			\begin{align*}
				\|x_{k+1}-x^\dagger\|^2
				&=\|x_k-\frac{\eta_k^Tf_{\tau_k}(x_k)}{\|f^{\prime}_{\tau_k}(x_k)^T\eta_k\|^2}f^{\prime}_{\tau_k}(x_k)^T\eta_k+\omega(x_k-x_{k-1})-x^\dagger\|^2\\
				&=\|x_k-\frac{\eta_k^Tf_{\tau_k}(x_k)}{\|f^{\prime}_{\tau_k}(x_k)^T\eta_k\|^2}f^{\prime}_{\tau_k}(x_k)^T\eta_k-x^\dagger\|^2+\omega^2\|x_k-x_{k-1}\|^2\\
				&\quad+2\langle x_k-\frac{\eta_k^Tf_{\tau_k}(x_k)}{\|f^{\prime}_{\tau_k}(x_k)^T\eta_k\|^2}f^{\prime}_{\tau_k}(x_k)^T\eta_k-x^\dagger,\omega(x_k-x_{k-1})\rangle\\
				&=A+B+C.
			\end{align*}
			From Lemma \ref{eq:x_k-b-x+} (ii),  we can obtain
			\begin{equation}\label{A}
				A
				=\|x_k-\frac{\eta_k^Tf_{\tau_k}(x_k)}{\|f_{\tau_k}^{\prime}(x_k)^T\eta_k\|^2}f_{\tau_k}^{\prime}(x_k)^T\eta_k-x^\dagger\|^2
				\leq\|x_k-x^\dagger\|^2-(1-2\xi)\frac{|\eta_k^Tf_{\tau_k}(x_k)|^2}{\|f_{\tau_k}^{\prime}(x_k)^T\eta_k\|^2}.
			\end{equation}
			Next we consider the term $B$ and $C$.
			\begin{align}
				B
				&=2\langle x_k-\frac{\eta_k^Tf_{\tau_k}(x_k)}{\|f^{\prime}_{\tau_k}(x_k)^T\eta_k\|^2}f^{\prime}_{\tau_k}(x_k)^T\eta_k-x^\dagger,\omega(x_k-x_{k-1})\rangle\nonumber\\
				&=2\omega\langle x_k-x^\dagger,x_k-x_{k-1}\rangle-2\omega\langle\frac{\eta_k^Tf_{\tau_k}(x_k)}{\|f^{\prime}_{\tau_k}(x_k)^T\eta_k\|^2}f^{\prime}_{\tau_k}(x_k)^T\eta_k,x_k-x_{k-1}\rangle\nonumber\\
				&=2\omega\|x_k-x^\dagger\|^2+2\omega\langle x_k-x^\dagger,x^\dagger-x_{k-1}\rangle+D\nonumber\\
				&=2\omega\|x_k-x^\dagger\|^2+\omega\|x_k-x_{k-1}\|^2-\omega\|x_{k-1}-x^\dagger\|^2-\omega\|x_k-x^\dagger\|^2+D\nonumber\\
				&=\omega\|x_k-x_{k-1}\|^2+\omega\|x_k-x^\dagger\|^2-\omega\|x_{k-1}-x^\dagger\|^2+D,\label{equation B}
			\end{align}
			where $D=-2\omega\langle\frac{\eta_k^Tf_{\tau_k}(x_k)}{\|f^{\prime}_{\tau_k}(x_k)^T\eta_k\|^2}f^{\prime}_{\tau_k}(x_k)^T\eta_k,x_k-x_{k-1}\rangle$.
			\begin{align}
				C
				&=\omega^2\|x_k-x_{k-1}\|^2\nonumber\\
				&\leq2\omega^2\|x_k-x^\dagger\|^2+2\omega^2\|x_{k-1}-x^\dagger\|^2\label{C}
			\end{align}
			Let $P(x_k)=\frac{\eta_k^Tf_{\tau_k}(x_k)}{\|f^{\prime}_{\tau_k}(x_k)^T\eta_k\|^2}f^{\prime}_{\tau_k}(x_k)^T\eta_k$, then
			\begin{align}
				D
				&=-2\omega P(x_k)^T(x_k-x_{k-1})\nonumber\\
				&=-\omega\left((x_k-x_{k-1})^T+P(x_k)^T-(x_k-x_{k-1})^T\right)(x_k-x_{k-1})-\omega P(x_k)^T\left(P(x_k)+x_k-x_{k-1}-P(x_k)\right)\nonumber\\
				&=-\omega\|x_k-x_{k-1}\|^2-\omega\|P(x_k)\|^2+\omega\|x_k-x^\dagger-P(x_k)-(x_{k-1}-x^\dagger)\|^2\nonumber\\
				&\leq\omega\left(-\|x_k-x_{k-1}\|^2-\|P(x_k)\|^2+2\|x_k-x^\dagger-P(x_k)\|^2+2\|x_{k-1}-x^\dagger\|^2\right)\nonumber\\
				&=\omega\left(-\|x_k-x_{k-1}\|^2+2\|x_k-x^\dagger\|^2+\|P(x_k)\|^2-4\langle x_k-x^\dagger,P(x_k)\rangle+2\|x_{k-1}-x^\dagger\|^2\right)\label{eq:D}
			\end{align}
			The first inequality is obtained by $\|a-b\|^2\leq 2\|a\|^2+2\|b\|^2$, where $a,b\in R^n$. We can get\\
			\begin{equation}\label{equation P}
				\|P(x_k)\|^2=\frac{|\eta_k^Tf_{\tau_k}(x_k)|^2}{\|f_{\tau_k}^{\prime}(x_k)^T\eta_k\|^2}=\frac{\|f_{\tau_k}(x_k)\|_{q}^{2q}}{\|f_{\tau_k}^{\prime}(x_k)^T\eta_k\|^2}.
			\end{equation}
			Let $E=-4\langle x_k-x^\dagger,P(x_k)\rangle$. Imitating the proof of the Lemma 2.3 in \cite{Ye24}, we can get
			\begin{equation}\label{equation E}
				E\leq-4(1-\xi)\frac{\|f_{\tau_k}(x_k)\|_q^{2q}}{\|f_{\tau_k}^{\prime}(x_k)^T\eta_k\|^2}.
			\end{equation}
			The detailed proof can be found in the Appendix \ref{appendix A}.
			Substituting (\ref{equation P}) and (\ref{equation E}) into (\ref{eq:D}), we can obtain
			\begin{equation}\label{equation D}
				\begin{aligned}
					D
					&\leq\omega\left(-\|x_k-x_{k-1}\|^2+2\|x_k-x^\dagger\|^2+\frac{\|f_{\tau_k}(x_k)\|_{q}^{2q}}{\|f_{\tau_k}^{\prime}(x_k)^T\eta_k\|^2}-4(1-\xi)\frac{\|f_{\tau_k}(x_k)\|_q^{2q}}{\|f_{\tau_k}^{\prime}(x_k)^T\eta_k\|^2}+2\|x_{k-1}-x^\dagger\|^2\right)\\
					&=\omega\left(2\|x_k-x^\dagger\|^2+2\|x_{k-1}-x^\dagger\|^2-\|x_k-x_{k-1}\|^2-(3-4\xi)\frac{\|f_{\tau_k}(x_k)\|_p^{2p}}{\|f_{\tau_k}^{\prime}(x_k)^T\eta_k\|^2}\right).
				\end{aligned}
			\end{equation}
			
			Substituting (\ref{equation D}) into inequality (\ref{equation B}), we can get
			\begin{equation*}
				B\leq\omega\left(3\|x_k-x^\dagger\|^2+\|x_{k-1}-x^\dagger\|^2-(3-4\xi)\frac{\|f_{\tau_k}(x_k)\|_q^{2q}}{\|f_{\tau_k}^{\prime}(x_k)^T\eta_k\|^2}\right).
			\end{equation*}
			Combine the above inequality and inequalities (\ref{A}) and (\ref{C}), we can obtain
			\begin{align*}
				\|x_{k+1}-x^\dagger\|^2
				&\leq(1+3\omega+2\omega^2)\|x_k-x^\dagger\|^2+(\omega+2\omega^2)\|x_{k-1}-x^\dagger\|^2-(1-2\xi+3\omega-4\omega\xi)\frac{\|f_{\tau_k}(x_k)\|_q^{2q}}{\|f^{\prime}(x_k)^T\eta_k\|^2}.
			\end{align*}
			From the definition of $\eta_k$ and $f^\prime_{\tau_k}(x_k)$ and imitating the proof of the Theorem 3.1 in \cite{Ye24}, we can obtain
			\begin{equation*}
				\frac{\|f_{\tau_k}(x_k)\|_q^{2q}}{\|f_{\tau_k}^{\prime}(x_k)^T\eta_k\|^2}\geq\frac{|\tau_k|^{1-q}}{\alpha}\|f_{\tau_k}(x_k)\|_2^2.
			\end{equation*}
			More details about the above can be found in Appendix \ref{appendix B}.
			
			Thus
			\begin{align*}
				\|x_{k+1}-x^\dagger\|^2
				&\leq(1+3\omega+2\omega^2)\|x_k-x^\dagger\|^2+(\omega+2\omega^2)\|x_{k-1}-x^\dagger\|^2-(1-2\xi+3\omega-4\omega\xi)\frac{|\tau_k|^{1-q}}{\alpha}\|f_{\tau_k}(x_k)\|_2^2.
			\end{align*}
			Thus the conclusion of the lemma holds.
		\end{proof}
	\end{lem}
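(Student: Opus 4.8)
The plan is to expand $\|x_{k+1}-x^\dagger\|^2$ directly from the iteration \eqref{eq:diedai}. Setting $P(x_k)=\frac{\eta_k^Tf_{\tau_k}(x_k)}{\|f'_{\tau_k}(x_k)^T\eta_k\|^2}f'_{\tau_k}(x_k)^T\eta_k$, one has $x_{k+1}-x^\dagger=\bigl(x_k-P(x_k)-x^\dagger\bigr)+\omega(x_k-x_{k-1})$, so the identity $\|u+v\|^2=\|u\|^2+2\langle u,v\rangle+\|v\|^2$ splits the target into $A+B+C$ with $A=\|x_k-P(x_k)-x^\dagger\|^2$, $B=2\omega\langle x_k-P(x_k)-x^\dagger,\,x_k-x_{k-1}\rangle$, and $C=\omega^2\|x_k-x_{k-1}\|^2$. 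The term $A$ is handled at once by the one-step descent bound of Lemma \ref{eq:x_k-b-x+}(ii), giving $A\le\|x_k-x^\dagger\|^2-(1-2\xi)\|P(x_k)\|^2$; and $C$ is bounded by the elementary inequality $\|a-b\|^2\le 2\|a\|^2+2\|b\|^2$ with $a=x_k-x^\dagger$, $b=x_{k-1}-x^\dagger$, so $C\le 2\omega^2\|x_k-x^\dagger\|^2+2\omega^2\|x_{k-1}-x^\dagger\|^2$.

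The substance of the argument is the cross term $B$. I would write $x_k-x_{k-1}=(x_k-x^\dagger)-(x_{k-1}-x^\dagger)$, split $B=2\omega\langle x_k-x^\dagger,x_k-x_{k-1}\rangle+D$ with $D=-2\omega\langle P(x_k),x_k-x_{k-1}\rangle$, and apply the polarization identity $2\langle x_k-x^\dagger,x_k-x_{k-1}\rangle=\|x_k-x^\dagger\|^2+\|x_k-x_{k-1}\|^2-\|x_{k-1}-x^\dagger\|^2$ to the first piece. For $D$ I would complete squares against $x_k-x_{k-1}$ and $P(x_k)$ and use $\|a-b\|^2\le2\|a\|^2+2\|b\|^2$ once more, reducing $D$ to a combination of $\|x_k-x_{k-1}\|^2$, $\|P(x_k)\|^2$, $\|x_k-x^\dagger\|^2$, $\|x_{k-1}-x^\dagger\|^2$ and the single remaining inner product $E=-4\langle x_k-x^\dagger,P(x_k)\rangle$. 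A crucial bookkeeping point is that the $\|x_k-x_{k-1}\|^2$ contributions from the polarization in $B$ and from the bound on $D$ cancel exactly, so no distance between consecutive iterates appears in the final estimate.

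The one genuinely new ingredient — and the step I expect to be the main obstacle — is the lower bound on $\langle x_k-x^\dagger,P(x_k)\rangle$ (equivalently the bound on $E$), since plain Cauchy--Schwarz is too weak. Here I would reuse the estimate behind Lemma 2.3 of \cite{Ye24}: because $f_i(x^\dagger)=0$ by Assumption \ref{asm1}(iii), the local tangential cone condition of Definition \ref{de3.1} at the pair $(x_k,x^\dagger)$ gives $|f_i(x_k)-f'_i(x_k)(x_k-x^\dagger)|\le\xi_i|f_i(x_k)|$ for every $i\in\tau_k$; multiplying by $\eta_k^{(i)}$ — which by \eqref{eta} has the same sign as $f_i(x_k)$ — and summing over $\tau_k$ yields $\langle f'_{\tau_k}(x_k)(x_k-x^\dagger),\eta_k\rangle\ge(1-\xi)\,\eta_k^Tf_{\tau_k}(x_k)$, and then multiplying by the nonnegative scalar $\frac{\eta_k^Tf_{\tau_k}(x_k)}{\|f'_{\tau_k}(x_k)^T\eta_k\|^2}$ and using $\eta_k^Tf_{\tau_k}(x_k)=\|f_{\tau_k}(x_k)\|_q^q\ge0$ gives $E\le-4(1-\xi)\frac{\|f_{\tau_k}(x_k)\|_q^{2q}}{\|f'_{\tau_k}(x_k)^T\eta_k\|^2}$ (details in Appendix \ref{appendix A}). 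Substituting this back into $D$, then $D$ into $B$, and collecting $A+B+C$, the coefficients of $\|x_k-x^\dagger\|^2$ and $\|x_{k-1}-x^\dagger\|^2$ assemble to $1+3\omega+2\omega^2$ and $\omega+2\omega^2$, while the residual term picks up the factor $(1-2\xi)+\omega(3-4\xi)=1-2\xi+3\omega-4\omega\xi$, which is nonnegative because $\xi\le\frac12$. Finally, using \eqref{eta} to bound $\|f'_{\tau_k}(x_k)^T\eta_k\|^2$ from above through $\alpha$ and an $\ell^1$-type norm of $\eta_k$, and applying Hölder and power-mean inequalities to pass from $q$-norms to the $2$-norm, one obtains $\frac{\|f_{\tau_k}(x_k)\|_q^{2q}}{\|f'_{\tau_k}(x_k)^T\eta_k\|^2}\ge\frac{|\tau_k|^{1-q}}{\alpha}\|f_{\tau_k}(x_k)\|_2^2$ (Appendix \ref{appendix B}); combined with the nonnegativity of the residual coefficient, this yields \eqref{lem1_eq1}.
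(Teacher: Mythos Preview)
Your proposal is correct and follows essentially the same route as the paper: the same $A+B+C$ decomposition, the same polarization identity for $B$, the same completion-of-squares treatment of $D$, the same tangential-cone argument for $E$, and the same Hölder/power-mean passage to the $2$-norm in the final step. The only minor slip is describing the upper bound on $\|f'_{\tau_k}(x_k)^T\eta_k\|^2$ as going through an ``$\ell^1$-type norm of $\eta_k$''; the paper's Appendix~B actually uses the Frobenius-norm inequality $\|f'_{\tau_k}(x_k)^T\eta_k\|^2\le\|f'_{\tau_k}(x_k)\|_F^2\|\eta_k\|_2^2\le|\tau_k|\alpha\|\eta_k\|_2^2$ together with $\|\eta_k\|_2^2=\|f_{\tau_k}(x_k)\|_{2q-2}^{2q-2}$, but since you defer to Appendix~\ref{appendix B} and state the correct final bound this does not affect the argument.
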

	\begin{thm}\label{thm1}
		Suppose that Assumption \ref{asm1} is true. Let $\alpha=
		\underset{1\leq i\leq \tau}{max}\underset{x\in  \mathcal{D}}{sup}\|f'_{i}(x)\|^2$ and $\omega\in[0,1)$ be the momentum parameter. Assume that the following expressions
		$a_1=1+3\omega+2\omega^2-(1-2\xi+3\omega-4\omega\xi)\frac{m^{1-q}\sigma^2_{min}(f^\prime(x^\dagger))}{\alpha (1+\xi)^2}$
		 and $a_2=\omega+2\omega^2$ such that $a_1+a_2<1$. Then the sequence $\{x_k\}$ generated by Algorithm \ref{alg2} satisfied
		\begin{equation*}
			\|x_{k+1}-x^\dagger\|^2\leq p_1^k(1+b_1)\|x_0-x^\dagger\|^2,
		\end{equation*}
		where $p_1=\frac{a_1+\sqrt{a_1^2+4a_2}}{2}$, $b_1=p_1-a_1$ and $a_1+a_2\leq p_1<1$.
		\begin{proof}
			By Lemma \ref{1+xi^2} we can obtain
			\begin{align}\label{thm_eq1}
				\|f_{\tau_k}(x_k)\|_2^2
				&=\sum_{i\in\tau_k}|f_i(x_k)|^2\nonumber\\
				&\geq|\tau_k|\delta_k\|f(x_k)-f(x^\dagger)\|^2\nonumber\\
				&\geq\frac{|\tau_k|}{m(1+\xi)^2}\|f^\prime(x^\dagger)(x_k-x^\dagger)\|^2\nonumber\\
				&\geq\frac{|\tau_k|}{m(1+\xi)^2}\sigma^2_{\min}(f^\prime(x^\dagger))\|x_k-x^\dagger\|^2.
			\end{align}
			The last inequality is makes use of  Lemma \ref{xk-x+ in R(f'(x+))} (iii).
			Then substituting (\ref{thm_eq1}) into (\ref{lem1_eq1}), we can get
			\begin{equation*}
				\begin{aligned}
					\|x_{k+1}-x^\dagger\|^2
					&\leq\left(1+3\omega+2\omega^2-(1-2\xi+3\omega-4\omega\xi)\frac{|\tau_k|^{2-q}\sigma^2_{min}(f^\prime(x^\dagger))}{m\alpha (1+\xi)^2}\right)\|x_k-x^\dagger\|^2+(\omega+2\omega^2)\|x_{k-1}-x^\dagger\|^2\\
					&\leq\left(1+3\omega+2\omega^2-(1-2\xi+3\omega-4\omega\xi)\frac{m^{1-q}\sigma^2_{min}(f^\prime(x^\dagger))}{\alpha (1+\xi)^2}\right)\|x_k-x^\dagger\|^2+(\omega+2\omega^2)\|x_{k-1}-x^\dagger\|^2.
				\end{aligned}
			\end{equation*}
			If
			Let
			$a_1=1+3\omega+2\omega^2-(1-2\xi+3\omega-4\omega\xi)\frac{m^{1-q}\sigma^2_{min}(f^\prime(x^\dagger))}{\alpha (1+\xi)^2}$
			 , $a_2=\omega+2\omega^2$ and $F_k=\|x_k-x^\dagger\|^2$. Then
			\begin{equation}\label{eq:F}
				F_{k+1}\leq a_1F_k+a_2F_{k-1},k\geq1.
			\end{equation}
			Since $\omega\in[0,1)$, then $a_2\geq0$. By assumption we get $a_1+a_2<1$. By Lemma \ref{a_1,a_2} and (\ref{eq:F}) we can obtain
			\begin{equation*}
				\|x_{k+1}-x^\dagger\|^2\leq p_1^k(1+b_1)\|x_0-x^\dagger\|^2,
			\end{equation*}
			where $p_1=\frac{a_1+\sqrt{a_1^2+4a_2}}{2}$, $b_1=p_1-a_1$ and $a_1+a_2\leq p_1<1$.
			Then the conclusion of the Theorem \ref{thm1} is proved.
		\end{proof}
	\end{thm}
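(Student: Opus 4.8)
The plan is to convert the single-step bound of Lemma~\ref{lem} into a scalar two-term recursion $F_{k+1}\le a_1F_k+a_2F_{k-1}$ with $F_k=\|x_k-x^\dagger\|^2$, and then apply Lemma~\ref{a_1,a_2} verbatim. Lemma~\ref{lem} already delivers $F_{k+1}\le(1+3\omega+2\omega^2)F_k+(\omega+2\omega^2)F_{k-1}-(1-2\xi+3\omega-4\omega\xi)\frac{|\tau_k|^{1-q}}{\alpha}\|f_{\tau_k}(x_k)\|_2^2$, so the only real work is to bound the negative term below by a fixed positive multiple of $F_k$ that does not depend on $k$.

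First I would lower-bound $\|f_{\tau_k}(x_k)\|_2^2$. From the greedy rule each $i\in\tau_k$ satisfies $|f_i(x_k)|^2\ge\delta_k\|f(x_k)\|^2$, and since $\max_{i\in[m]}|f_i(x_k)|^2\ge\frac1m\|f(x_k)\|^2$ we get $\delta_k\ge\frac1m$; hence $\|f_{\tau_k}(x_k)\|_2^2\ge\frac{|\tau_k|}{m}\|f(x_k)-f(x^\dagger)\|^2$, using $f(x^\dagger)=0$. Applying Lemma~\ref{1+xi^2} with $\tau=[m]$ to the pair $(x^\dagger,x_k)$ turns this into $\ge\frac{|\tau_k|}{m(1+\xi)^2}\|f^\prime(x^\dagger)(x_k-x^\dagger)\|^2$, and the inclusion $x_k-x^\dagger\in\mathcal{R}(f^\prime(x^\dagger)^T)$ from Lemma~\ref{xk-x+ in R(f'(x+))}(iii) then gives the singular-value estimate $\|f^\prime(x^\dagger)(x_k-x^\dagger)\|^2\ge\sigma_{\min}^2(f^\prime(x^\dagger))\|x_k-x^\dagger\|^2$. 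Combining, $\|f_{\tau_k}(x_k)\|_2^2\ge\frac{|\tau_k|}{m(1+\xi)^2}\sigma_{\min}^2(f^\prime(x^\dagger))F_k$.

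Substituting this into Lemma~\ref{lem} yields the coefficient $1+3\omega+2\omega^2-(1-2\xi+3\omega-4\omega\xi)\frac{|\tau_k|^{2-q}\sigma_{\min}^2(f^\prime(x^\dagger))}{m\alpha(1+\xi)^2}$ in front of $F_k$. Since $\xi\le\frac12$ makes $1-2\xi+3\omega-4\omega\xi\ge0$ and, with $q\ge2$, the exponent $2-q$ is nonpositive so that $|\tau_k|^{2-q}\ge m^{2-q}$ (as $1\le|\tau_k|\le m$), replacing $|\tau_k|^{2-q}$ by $m^{2-q}$ only enlarges the bound and collapses the coefficient to the $k$-free $a_1=1+3\omega+2\omega^2-(1-2\xi+3\omega-4\omega\xi)\frac{m^{1-q}\sigma_{\min}^2(f^\prime(x^\dagger))}{\alpha(1+\xi)^2}$. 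This gives $F_{k+1}\le a_1F_k+a_2F_{k-1}$ for $k\ge1$ with $a_2=\omega+2\omega^2\ge0$ (since $\omega\in[0,1)$), $a_1+a_2<1$ by hypothesis, and $F_1=F_0$ because Algorithm~\ref{alg2} initializes $x_1=x_0$; Lemma~\ref{a_1,a_2} then produces $F_{k+1}\le p_1^k(1+b_1)F_0$ with $p_1=\frac{a_1+\sqrt{a_1^2+4a_2}}{2}$, $b_1=p_1-a_1$, $a_1+a_2\le p_1<1$, which is exactly the assertion. I expect the only delicate point to be precisely this index-set bookkeeping — checking the sign of $1-2\xi+3\omega-4\omega\xi$ and the correct monotonicity direction when passing from $|\tau_k|^{2-q}$ to $m^{2-q}$ — after which everything is either a direct citation of Lemmas~\ref{1+xi^2}, \ref{xk-x+ in R(f'(x+))}(iii), \ref{lem}, \ref{a_1,a_2} or a routine rearrangement.
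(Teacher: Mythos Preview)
Your proposal is correct and follows essentially the same route as the paper's proof: lower-bound $\|f_{\tau_k}(x_k)\|_2^2$ via the greedy threshold $\delta_k\ge\tfrac1m$, pass through Lemma~\ref{1+xi^2} at $x^\dagger$ and the range inclusion of Lemma~\ref{xk-x+ in R(f'(x+))}(iii) to reach $\sigma_{\min}^2(f^\prime(x^\dagger))F_k$, substitute into Lemma~\ref{lem}, replace $|\tau_k|^{2-q}$ by $m^{2-q}$, and invoke Lemma~\ref{a_1,a_2}. In fact you supply details the paper leaves implicit, namely the sign check $1-2\xi+3\omega-4\omega\xi\ge0$, the monotonicity $|\tau_k|^{2-q}\ge m^{2-q}$ for $q\ge2$, and the initialization $F_1=F_0$ needed for Lemma~\ref{a_1,a_2}.
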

	\begin{thm}\label{thm2}
		Suppose that Assumption \ref{asm1} is true. Let $\alpha=
		\underset{1\leq i\leq \tau}{max}\underset{x\in  \mathcal{D}}{sup}\|f'_{i}(x)\|^2$ and $\omega\in[0,1)$ be the momentum parameter. Assume the following expressions
		$a_1=1+3\omega+2\omega^2-(1-2\xi+3\omega-4\omega\xi)\frac{m^{1-q}\rho\sigma^2_{min}(f^\prime(x^\dagger))}{\alpha (1+\xi)^2}$
		 and $a_2=\omega+2\omega^2$ satisfy $a_1+a_2<1$. Then the sequence {${x_k}$} generated by Algorithm \ref{alg3} satisfied
		\begin{equation*}
			\|x_{k+1}-x^\dagger\|^2\leq p_2^k(1+b_2)\|x_0-x^\dagger\|^2,
		\end{equation*}
		where $p_2=\frac{a_1+\sqrt{a_1^2+4a_2}}{2}$, $b_2=p_2-a_1$ and $a_1+a_2\leq p_2<1$.
		\begin{proof}
			By Lemma \ref{1+xi^2} we can obtain
			\begin{align}\label{thm_eq2}
				\|f_{\tau_k}(x_k)\|_2^2
				&=\sum_{i\in\tau_k}|f_i(x_k)|^2\nonumber\\
				&\geq|\tau_k|\rho\max_{i\in[m]}|f_i(x_k)|^2\nonumber\\
				&\geq\frac{|\tau_k|\rho}{m}\|f(x_k)-f(x^\dagger)\|^2\nonumber\\
				&\geq\frac{|\tau_k|\rho}{m(1+\xi)^2}\|f^\prime(x^\dagger)(x_k-x^\dagger)\|^2\nonumber\\
				&\geq\frac{|\tau_k|\rho}{m(1+\xi)^2}\sigma^2_{\min}(f^\prime(x^\dagger))\|x_k-x^\dagger\|^2.
			\end{align}
			The last inequality is makes use of Lemma \ref{xk-x+ in R(f'(x+))} (iii).
			Then substituting (\ref{thm_eq2}) into (\ref{lem1_eq1}), we can get
			\begin{equation*}
				\begin{aligned}
					\|x_{k+1}-x^\dagger\|^2
					&\leq\left(1+3\omega+2\omega^2-(1-2\xi+3\omega-4\omega\xi)\frac{|\tau_k|^{2-q}\rho\sigma^2_{min}(f^\prime(x^\dagger))}{m\alpha (1+\xi)^2}\right)\|x_k-x^\dagger\|^2+(\omega+2\omega^2)\|x_{k-1}-x^\dagger\|^2\\
					&\leq\left(1+3\omega+2\omega^2-(1-2\xi+3\omega-4\omega\xi)\frac{m^{1-q}\rho\sigma^2_{min}(f^\prime(x^\dagger))}{\alpha (1+\xi)^2}\right)\|x_k-x^\dagger\|^2+(\omega+2\omega^2)\|x_{k-1}-x^\dagger\|^2,
				\end{aligned}
			\end{equation*}
			then similar to the proof in Theorem \ref{thm1} the conclusion of the Theorem \ref{thm2} is proved.
		\end{proof}
	\end{thm}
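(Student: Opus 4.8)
The plan is to reduce everything to the scalar two-step recursion handled by Lemma~\ref{a_1,a_2}: I would show that $F_k := \|x_k-x^\dagger\|^2$ satisfies $F_{k+1}\le a_1 F_k + a_2 F_{k-1}$ for $k\ge 1$, with $a_1$ and $a_2$ exactly the quantities in the statement, and then quote Lemma~\ref{a_1,a_2}, using that $F_1 = F_0$ since Algorithm~\ref{alg3} initializes $x_1 = x_0$. All the delicate bookkeeping on the momentum cross-terms has already been absorbed into the estimate \eqref{lem1_eq1} of Lemma~\ref{lem}, so the only new work specific to Algorithm~\ref{alg3} is to convert the descent quantity $\|f_{\tau_k}(x_k)\|_2^2$ appearing there into a multiple of $F_k$ by exploiting the maximum-residual index rule.

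First I would lower-bound $\|f_{\tau_k}(x_k)\|_2^2$. By the definition of $\tau_k$ every index $i\in\tau_k$ satisfies $|f_i(x_k)|^2\ge\rho\max_{j\in[m]}|f_j(x_k)|^2$, and the maximum of the $m$ numbers $|f_j(x_k)|^2$ is at least their average, which equals $\frac{1}{m}\|f(x_k)\|^2 = \frac{1}{m}\|f(x_k)-f(x^\dagger)\|^2$ because $f(x^\dagger)=0$; summing over $i\in\tau_k$ gives $\|f_{\tau_k}(x_k)\|_2^2 \ge \frac{|\tau_k|\rho}{m}\|f(x_k)-f(x^\dagger)\|^2$. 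Then Lemma~\ref{1+xi^2} applied with $x_1=x_k$, $x_2=x^\dagger$ and $\tau=[m]$ upgrades this to $\ge \frac{|\tau_k|\rho}{m(1+\xi)^2}\|f'(x^\dagger)(x_k-x^\dagger)\|^2$, and finally Lemma~\ref{xk-x+ in R(f'(x+))}(iii), which puts $x_k-x^\dagger$ in $\mathcal{R}(f'(x^\dagger)^T)$ and hence orthogonal to $\mathcal{N}(f'(x^\dagger))$, lets me replace $\|f'(x^\dagger)(x_k-x^\dagger)\|$ by $\sigma_{\min}(f'(x^\dagger))\|x_k-x^\dagger\|$. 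Chaining the four inequalities yields $\|f_{\tau_k}(x_k)\|_2^2 \ge \frac{|\tau_k|\rho}{m(1+\xi)^2}\sigma_{\min}^2(f'(x^\dagger))\,F_k$.

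Next I would substitute this into \eqref{lem1_eq1}. The one point that must be checked is that the coefficient $1-2\xi+3\omega-4\omega\xi$ of the descent term is nonnegative, so that multiplying the lower bound just derived by $-(1-2\xi+3\omega-4\omega\xi)\frac{|\tau_k|^{1-q}}{\alpha}$ keeps the inequality pointing the right way; this holds because $1-2\xi\ge0$ (as $\xi\le 1/2$) and $\omega(3-4\xi)\ge0$ (as $\omega\ge0$ and $\xi\le1/2$). The substitution gives a bound on $F_{k+1}$ with $F_{k-1}$-coefficient $\omega+2\omega^2$ and $F_k$-coefficient $1+3\omega+2\omega^2-(1-2\xi+3\omega-4\omega\xi)\frac{|\tau_k|^{2-q}\rho\,\sigma_{\min}^2(f'(x^\dagger))}{m\alpha(1+\xi)^2}$. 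To make the latter independent of the running block size I would use $|\tau_k|\le m$ together with $q\ge2$ (as elsewhere in the paper), so that the exponent $2-q\le 0$ forces $|\tau_k|^{2-q}\ge m^{2-q}$ and hence $\frac{|\tau_k|^{2-q}}{m}\ge m^{1-q}$; the $F_k$-coefficient is then $\le a_1$. With $a_2=\omega+2\omega^2\ge0$ and the hypothesis $a_1+a_2<1$, Lemma~\ref{a_1,a_2} gives $F_{k+1}\le p_2^k(1+b_2)F_0$ with $p_2=\frac{a_1+\sqrt{a_1^2+4a_2}}{2}$, $b_2 = p_2-a_1$, and $a_1+a_2\le p_2<1$, which is the assertion.

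I do not expect a genuine obstacle here: once Lemma~\ref{lem} is in hand the argument is line-for-line the same as the proof of Theorem~\ref{thm1}, the only change being the extra factor $\rho$ coming from the maximum-residual threshold $\rho\max_{i\in[m]}|f_i(x_k)|^2$ in place of the adaptive threshold $\delta_k\|f(x_k)\|^2$. The single spot where care is warranted is the sign and monotonicity bookkeeping --- confirming $1-2\xi+3\omega-4\omega\xi\ge0$ and that $2-q\le0$ makes $|\tau_k|^{2-q}$ large rather than small --- since a slip there flips an inequality and destroys the contraction; all the remaining manipulations are routine and already encapsulated in Lemmas~\ref{lem} and~\ref{a_1,a_2}.
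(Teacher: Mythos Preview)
Your proposal is correct and follows essentially the same route as the paper: lower-bound $\|f_{\tau_k}(x_k)\|_2^2$ via the maximum-residual rule, the max--average inequality, Lemma~\ref{1+xi^2}, and Lemma~\ref{xk-x+ in R(f'(x+))}(iii), then substitute into \eqref{lem1_eq1}, relax $|\tau_k|^{2-q}/m$ to $m^{1-q}$, and invoke Lemma~\ref{a_1,a_2}. You are in fact a bit more explicit than the paper in checking the sign of $1-2\xi+3\omega-4\omega\xi$ and the role of $q\ge 2$; the only cosmetic slip is that when you apply Lemma~\ref{1+xi^2} you should take $x_1=x^\dagger$, $x_2=x_k$ (not the reverse) to land on $f'(x^\dagger)$, but the norm symmetry makes this immaterial.
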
		 	
	
	 \begin{remark}
	 	If replace the Assumption \ref{asm1} (iv) by $f^\prime(x_*)$ is column full rank matrix. Then $\mathcal{N}(f^\prime(x_*))=\{0\}$. If $\bar{x}_*$ is another solution of (\ref{eq1.1}), from (\ref{x*-bar(x)*}) we have $x_*-\bar{x}_* \in \mathcal{N}(f^\prime(x_*))$. So $x_*=\bar{x}_*$, which implies the solution of (\ref{eq1.1}) is unique. Under this situation, Algorithms \ref{alg2} and \ref{alg3} converge to the unique solution.
	 \end{remark}
	
	\section{Numerical experiments}
	\label{sec4}
	In this section, some numerical experiments are present to verify the effectiveness of the RBWNK-m method and the MRWNK-m method for solving nonlinear systems of equations. We use the number of iteration steps (IT) and the elapsed computing time in seconds (CPU) to measure the effectiveness of the methods. The IT and CPU are obtained from the average value obtained from 10 experiments. The momentum parameters of RBWNK-m and MRWNK-m are selected by an exhaustive method from 0.01 to 0.99 at intervals of 0.01 each time to select the optimal. All experiments are terminated when $r^2=\|-f(x_k)\|^2\leq 10^{-6}$ or IT exceed $10000$. In Remark \ref{remark:1} we know when $q=2$ the RBWNK method is the NGABK method in \cite{Lv24}. Similarly, we can obtain the MRWNK-m method with $q=2$, $\omega=0$ equal to the MRABK method. For convenience, the MRWNK-m method with $\omega=0$ is written as MRWNK. In tables, the "$>$" defines the IT exceeding 10000.
	
	All experiments are performed using MATLAB (version R2021a) on a personal computer with a 2.20 GHz central processing unit  (13th Gen Inte l(R) Core (TM) i9-13900HX), 32.0 GB memory and Windows operating system (64 bit Windows 11).
	
	\begin{exm}
		Singular Broyden problem\cite{Gomes92}\label{ex:Singular Broyden}
				\begin{align*}
			&f_k(x)=\left((3-2x_k)x_k-2x_{k+1}+1\right)=0, &k=1,\\
			&f_k(x)=\left((3-2x_k)x_k-x_{k-1}-2x_{k+1}+1\right)=0, &1<k<n,\\
			&f_k(x)=\left((3-2x_k)x_k-x_{k-1}+1\right)=0, &k=n.
		\end{align*}
	\end{exm}
	In this experiment, we set the initial estimate $x_0=(-0.5,-0.5,...,-0.5)^T\in R^n$ and $n=100,500,1000$, $\omega=0.5$ in both RBWNK-m and MRWNK-m. From Table \ref{tab:singular rho} we find when $\rho=0.1$, $0.2$, $0.3$ the IT and CPU are less. Moreover, when other settings are fixed, IT and CPU increase with $\rho$. IT and CPU are larger for $\rho=0.1$ with some settings. So we choose $\rho=0.2$. Next we study the effect of $q$ on different methods. In Fig.\ref{figure singular q}, we depict the curves of IT versus the parameter $q$ with $n = 100$,  $500$, $1000$ and $q$ ranges from $2$ to $9$, respectively. From Fig.\ref{figure singular q} we can obtain the best choice of the value of $q$ for different methods. We select for each of these methods the $q$ that has relatively little IT for the different settings. Here we study RBWNK and RBWNK-m with $q=4$ and MRWNK and MRWNK-m with $q=2$. To explore the effect of the momentum term on the convergence efficiency, we depict the curves of $r$ versus IT for different methods for $n=100$, $500$, $1000$ in Fig.\ref{figure singular}. We list IT and CPU with the settings of the above in Table \ref{tab:singular}. Fig.\ref{figure singular} and Table \ref{tab:singular} show that the methods with momentum terms have less IT and CPU than those without, which verifies the efficiency of RBWNK-m and MRWNK-m. Especially RBWNK-m dramatically reduces IT and CPU than RBWNK. Although MRWNK and MRWNK-m have the same IT when $n=500$, MRWNK-m has fewer CPU. We can infer that adding momentum terms can speed up convergence.
	
	\begin{table}
		\centering
		\caption{IT of MRWNK for Singular Broyden problem with different $\rho$}
		\label{tab:singular rho}
		\vspace{1mm}
		\begin{tabular}{ccllllllll}
			\toprule
			$\rho$&   $0.1$& $0.2$& $0.3$& $0.4$& $0.5$&$0.6$ & $0.7$& $0.8$&$0.9$\\
			\midrule
			$m=100,q=2$&
			$\mathbf{32}$& $48$& $67$& $165$& $223$& $258$& $435$& $1288$&$1867$
			\\
			$m=100,q=3$& $\mathbf{60}$& $67$& $71$& $176$& $270$& $344$& $480$& $763$&$1648$
			\\
			$m=100,q=4$& $98$& $\mathbf{95}$& $120$& $247$& $283$& $384$& $516$& $968$&$1932$
			\\
			$m=500,q=2$& $33$& $\mathbf{31}$& $44$& $303$& $831$& $1439$& $2911$& $5438$&$>$\\
			$m=500,q=3$& $53$& $\mathbf{49}$& $64$& $589$& $665$& $1278$& $2461$& $5049$&$8300$
			\\
			$m=500,q=4$& $110$& $82$& $\mathbf{73}$& $90$& $541$& $816$& $2097$& $>$&$8308$
			\\
			$m=1000,q=2$& $\mathbf{31}$& $37$& $54$& $67$& $1655$& $2633$& $6909$& $8343$&$>$\\
			$m=1000,q=3$& $61$& $\mathbf{54}$& $57$& $1085$& $2371$& $3101$& $5567$& $>$&$>$\\
			$m=1000,q=4$& $128$& $93$& $\mathbf{76}$& $1370$& $680$& $1588$& $3038$& $7581$&$>$\\ \bottomrule\end{tabular}
		
	\end{table}
	\begin{table}
		\centering
		\caption{IT and CPU of different methods for Singular Broyden problem with corresponding $q$}
		\label{tab:singular}
		\vspace{1mm}
		\begin{tabular}{cclll}
			\toprule
			Method&  $n$& $100$& $500$&$1000$\\
			\midrule
			RBWNK&
			IT & $592$
			& $2651$
			&$6050$
			\\
			($q=4$)&CPU & $0.0021$& $0.0249$&$0.092$\\
			MRWNK&IT
			& $48$ & $31$ &$37$ \\
			($q=2$, $\rho=0.2$)&CPU & $0.0004$& $0.0061$ &$0.0180$ \\
			RBWNK-m&IT
			& $86$& $82$
			&$912$
			\\
			($q=4$, $\omega=0.5$)&CPU & $0.0003$& $0.0067$&$0.0259$\\
			MRWNK-m&IT
			& $\mathbf{23}$& $\mathbf{31}$&$\mathbf{30}$\\
			($q=2$, $\rho=0.2$, $\omega=0.5$)&CPU & $\mathbf{0.0001}$& $\mathbf{0.0028}$&$\mathbf{0.0073}$\\\bottomrule\end{tabular}
		
	\end{table}
	
	\begin{figure}[!t]
		\centering
		\subfigure[$n=100$]{
			\includegraphics[scale=0.35]{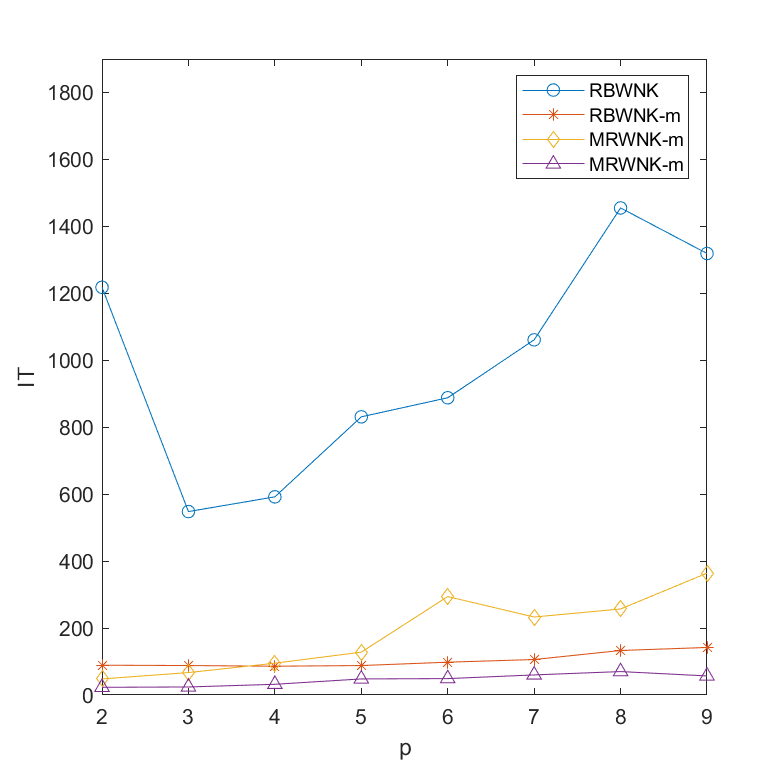}}
		\subfigure[$n=500$]{
			\includegraphics[scale=0.35]{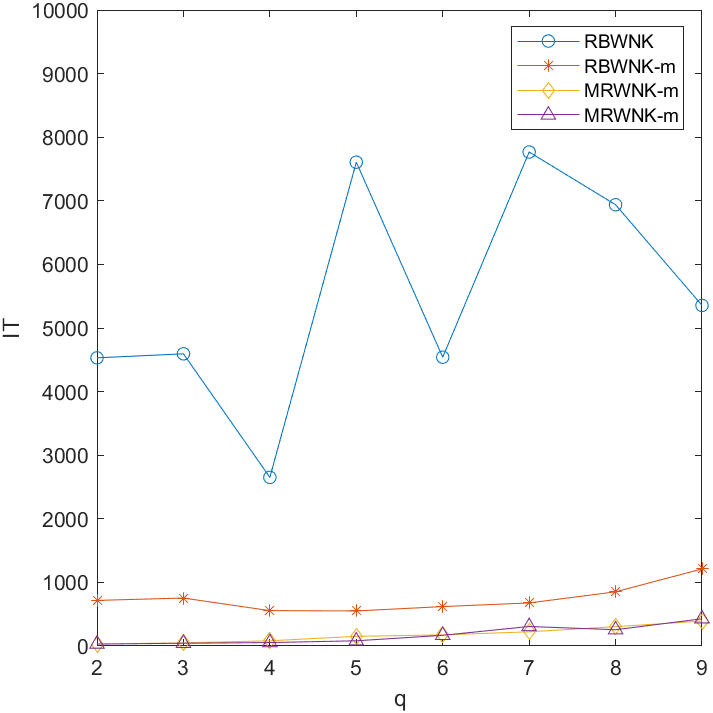}}
		\subfigure[$n=1000$]{	\includegraphics[scale=0.35]{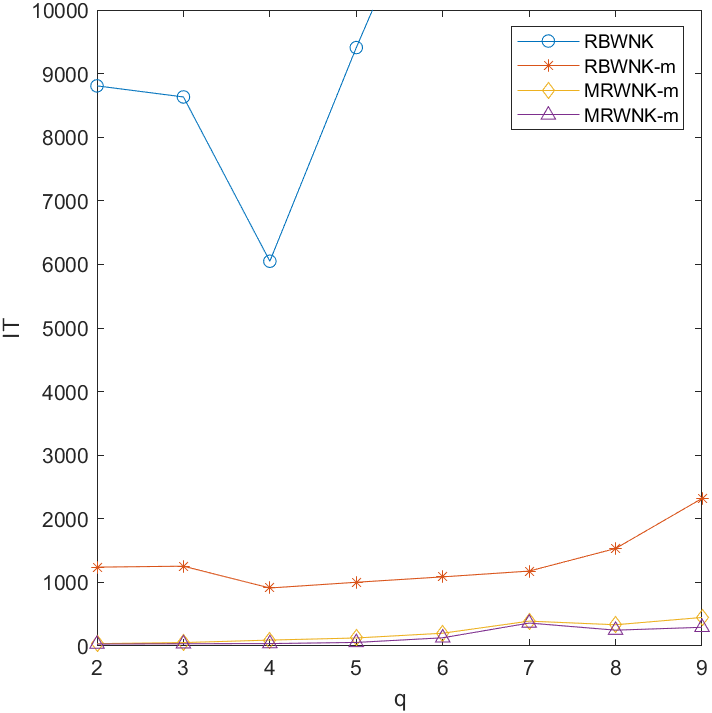}}\\
		\caption[d]{IT of the four methods versus $q$ for Singular Broyden problem with $n=100$, $500$, $1000$}
		\label{figure singular q}
	\end{figure}
	\begin{figure}[!t]
		\centering
		\subfigure[$n=100$]{
			\includegraphics[scale=0.4]{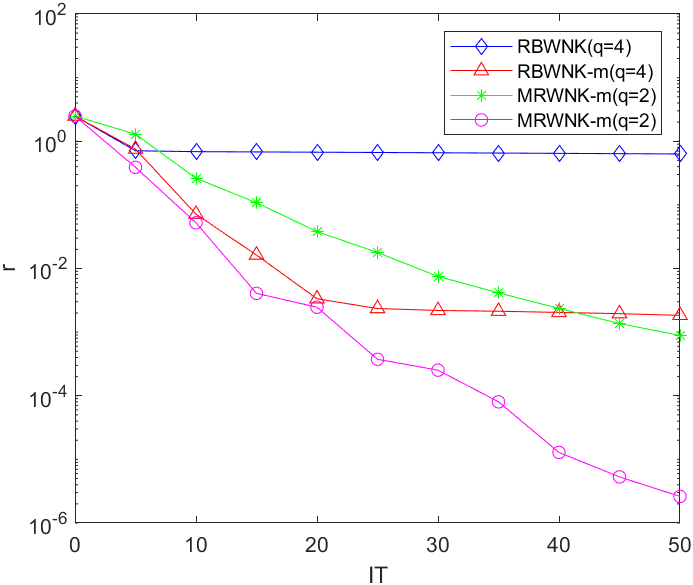}}
		\subfigure[$n=500$]{
			\includegraphics[scale=0.4]{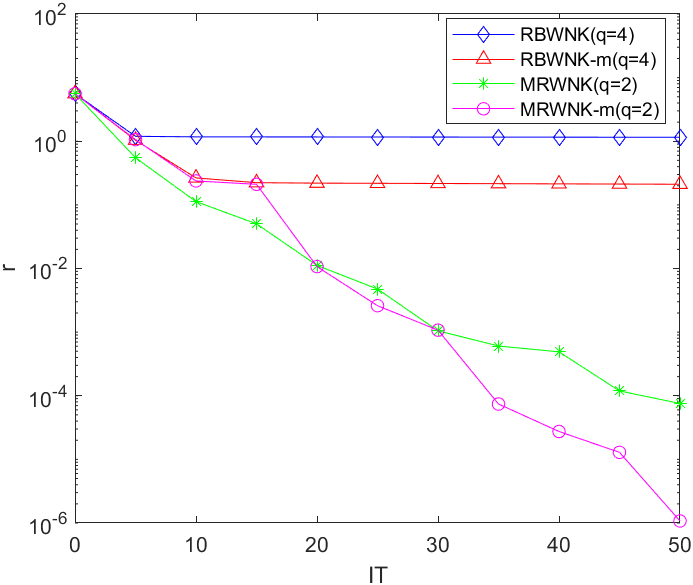}}
		\subfigure[$n=1000$]{	\includegraphics[scale=0.4]{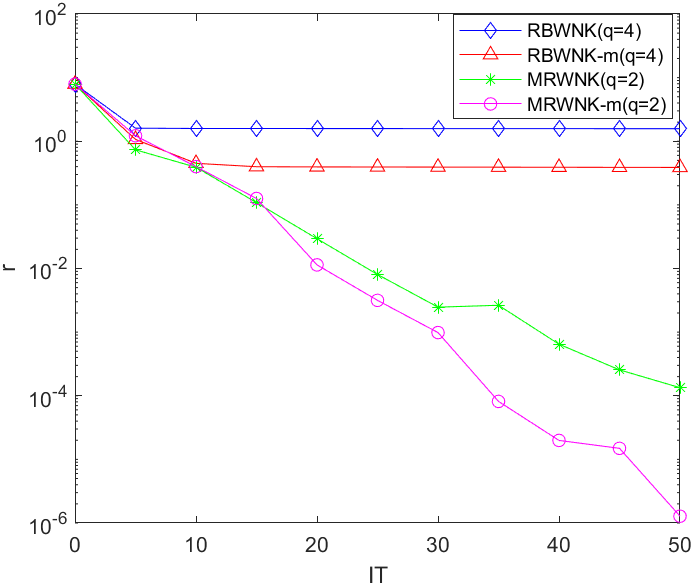}}\\
		\caption{The results of the Singular Broyden problem  $n=100$, $500$, $1000$}
		\label{figure singular}
	\end{figure}

	\begin{exm}
		H-equation \cite{Wang21}\label{ex:H}
		\begin{equation*}
			f_i(x)=x_i-\left(1-\frac{c}{2n}\sum^N_{j=1}\frac{\mu _ix_j}{\mu _i+\mu _j}\right)=0, i=1,2,...n.
		\end{equation*}
		where $\mu _i=(i-1/2)/n$.
	\end{exm}
	\vspace{-3mm}
	In this experiment, we set the initial estimate $x_0=(0,0,...,0)^T\in R^n$ and $n=100$, $500$, $1000$, $\omega=0.1$ in both RBWNK-m and MRWNK-m. From Table \ref{tab:H rho}, we find that in most settings MRWNK with $\rho=0.1$ has the best efficiency. In Fig.\ref{figure H q}, we depict the curves of IT versus the parameter $q$ with $q$ ranging from $2$ to $9$, respectively. In Fig.\ref{figure H q} we observe that the IT for all these methods increases with the value of $q$, so the best choice of $q$ is $2$. So we study the four methods with $q=2$. In Fig.\ref{figure H} and Table \ref{tab:H} we also observe that RBWNK-m and MRWNK-m converge faster than RBWNK and MRWNK, and MRWNK-m has the least IT and CPU.
	
	\begin{table}
		\centering
		\caption{IT of MRWNK for H-equation with different $\rho$}
		\vspace{1mm}
		\begin{tabular}{ccllllllll}
			\toprule
			$\rho$&   $0.1$& $0.2$& $0.3$& $0.4$& $0.5$&$0.6$ & $0.7$& $0.8$&$0.9$\\
			\midrule
			$m=200,q=2$&
			$\mathbf{21}$& $23$& $23$& $35$& $50$& $51$& $76$& $108$&$231$
			\\
			$m=200,q=3$& $39$& $\mathbf{30}$& $36$& $46$& $58$& $75$& $103$& $162$&$300$
			\\
			$m=200,q=4$& $\mathbf{37}$& $49$& $44$& $46$& $60$& $77$& $106$& $157$&$301$
			\\
			$m=400,q=2$& $\mathbf{24}$& $26$& $27$& $39$& $55$& $64$& $89$& $146$&$312$
			\\
			$m=400,q=3$
			& $41$& $\mathbf{36}$& $42$& $53$& $69$& $91$& $126$& $199$&$398$
			\\
			$m=400,q=4$& $61$& $60$& $\mathbf{46}$& $53$& $69$& $92$& $128$& $199$&$398$
			\\
			$m=800,q=2$& $\mathbf{25}$& $27$& $28$& $40$& $58$& $75$& $89$& $152$&$314$
			\\
			$m=800,q=3$& $44$& $\mathbf{37}$& $44$& $57$& $74$& $99$& $136$& $213$&$433$
			\\
			$m=800,q=4$& $70$& $67$& $\mathbf{51}$& $56$& $74$& $99$& $136$& $215$&$433$
			\\ \bottomrule \end{tabular}
		\label{tab:H rho}
	\end{table}
	\begin{table}
		\centering
		\caption{IT and CPU of different methods for H-equation with corresponding $q$}
		\label{tab:H}
		\vspace{1mm}
		\begin{tabular}{cclll}
			\toprule
			Method&  $n$& $100$& $500$&$1000$\\
			\midrule
			RBWNK&
			IT & $53$& $53$&$53$\\
			($q=2$)&CPU & $0.0245$& $0.6989$&$1.8334$\\
			MRWNK&IT
			& $21$& $24$&$25$\\
			($q=2$, $\rho=0.2$)&CPU & $0.0156$& $0.4432$&$1.2540$\\
			RBWNK-m&IT
			& $42$& $43$&$45$\\
			($q=2$, $\omega=0.5$)&CPU & $0.0172$& $0.4629$&$1.3726$\\
			MRWNK-m&IT
			& $\mathbf{19}$& $\mathbf{21}$&$\mathbf{22}$\\
			($q=2$, $\rho=0.2$, $\omega=0.5$)&CPU & $\mathbf{0.0142}$& $\mathbf{0.3779}$&$\mathbf{1.0776}$\\\bottomrule\end{tabular}
	\end{table}

	\begin{table}
	\centering
	\caption{IT of MRWNK for nondquar problem with different $\rho$}
	\label{tab:nondquar rho}
	\vspace{1mm}
	\begin{tabular}{ccllllllll}
		\toprule
		$\rho$&   $0.1$& $0.2$& $0.3$& $0.4$& $0.5$&$0.6$ & $0.7$& $0.8$&$0.9$\\
		\midrule
		$m=200,q=2$&
		$2081$& $1471$& $\mathbf{1214}$& $1246$& $1298$& $1307$& $1330$& $1358$ & $1392$
		\\
		$m=200,q=3$& $1207$& $\mathbf{1181}$& $1186$& $1248$& $1302$& $1317$& $1340$& $1363$&$1389$
		\\
		$m=200,q=4$& $1070$& $1152$& $1161$& $1255$& $1313$& $1335$& $1349$& $1363$&$1398$
		\\
		$m=400,q=2$& $4371$& $3190$& $\mathbf{2545}$& $2583$& $2649$& $2740$& $2766$& $2822$&$2879$
		\\
		$m=400,q=3$
		& $2523$& $2465$& $\mathbf{2428}$& $2627$& $2604$& $2786$& $2800$& $2857$&$2884$
		\\
		$m=400,q=4$& $\mathbf{2308}$& $2396$& $2384$& $2600$& $2653$& $2790$& $2809$& $2838$&$2899$
		\\
		$m=800,q=2$& $10000$& $6496$& $\mathbf{5233}$ & $5331$& $5565$& $5610$& $5755$& $5850$&$5990$
		\\
		$m=800,q=3$& $5191$& $5211$& $\mathbf{5151}$& $5444$& $5488$& $5754$& $5803$& $5897$&$6016$
		\\
		$m=800,q=4$& $\mathbf{4944}$& $5112$& $5157$& $5463$& $5651$& $5850$& $5768$& $5903$&$6002$
		\\ \bottomrule \end{tabular}
	\end{table}
    
	\begin{figure}[!t]
		\centering
		\subfigure[$n=100$]{
			\includegraphics[scale=0.35]{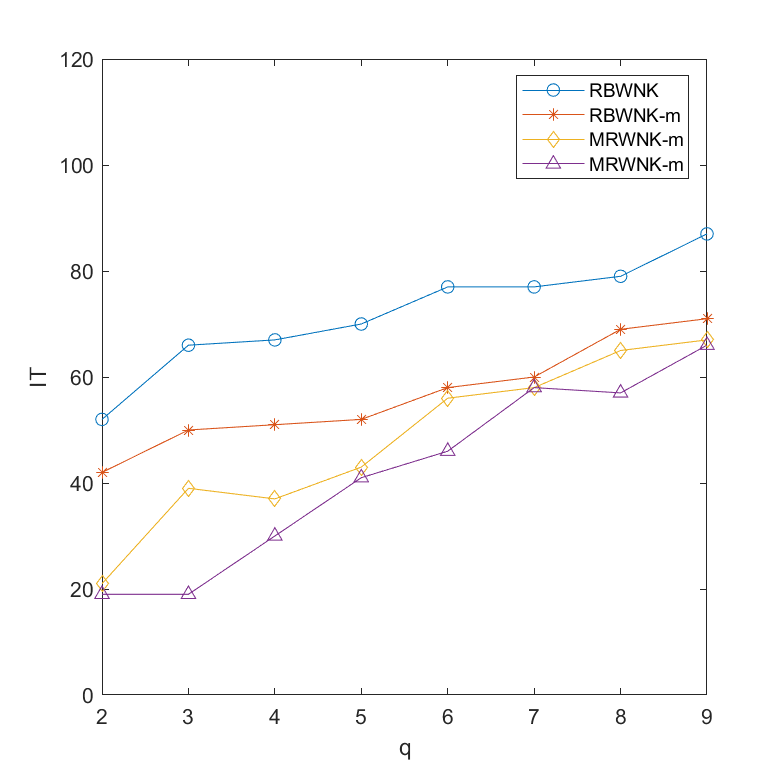}}
		\subfigure[$n=500$]{
			\includegraphics[scale=0.35]{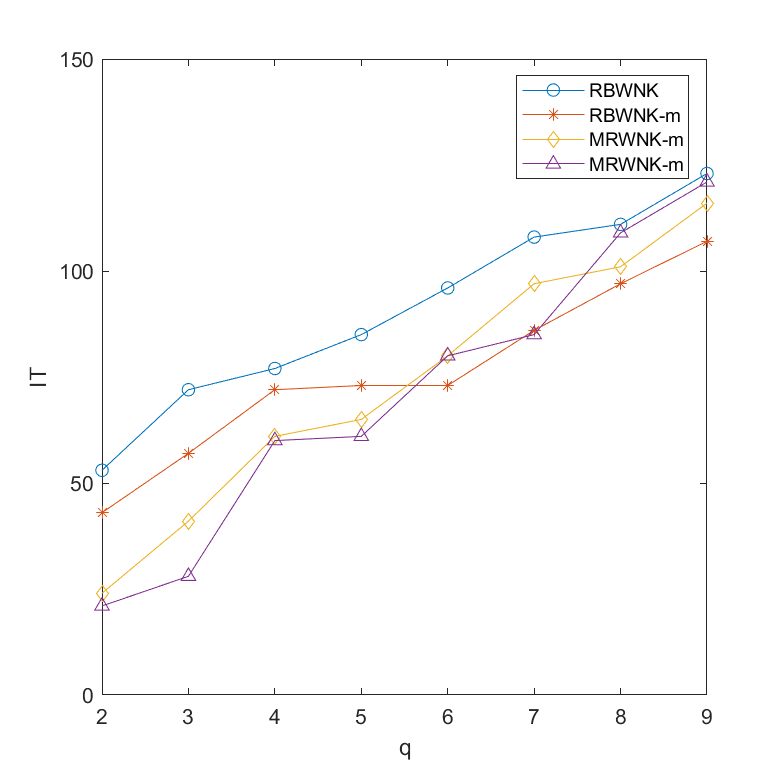}}
		\subfigure[$n=1000$]{
			\includegraphics[scale=0.35]{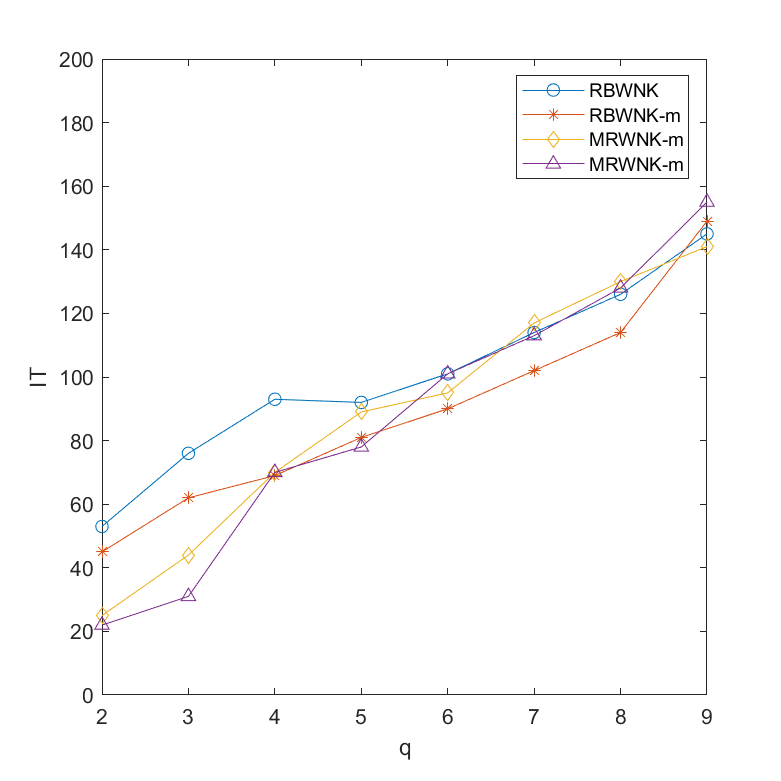}}\\
		\caption[d]{IT of  the four methods versus $q$ for H-equation with $m=n=100$, $500$, $1000$}
		\label{figure H q}
	\end{figure}
	
	\begin{figure}[!t]
		\centering
		\subfigure[$n=100$]{
			\includegraphics[scale=0.4]{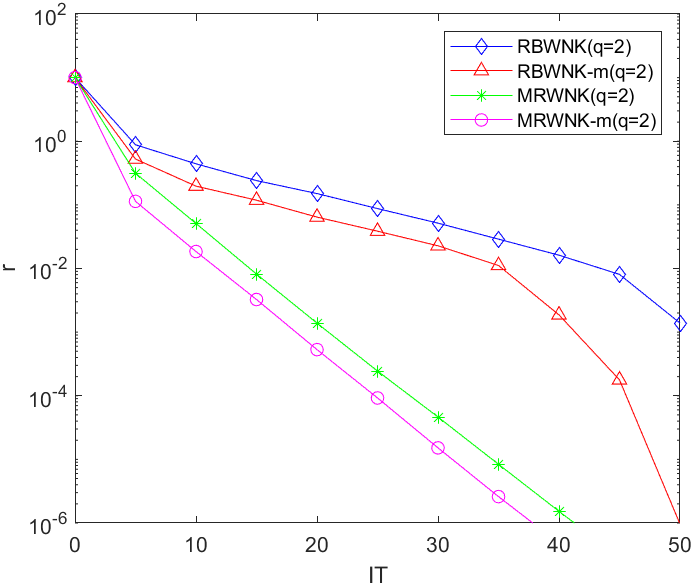}}
		\subfigure[$n=500$]{
			\includegraphics[scale=0.4]{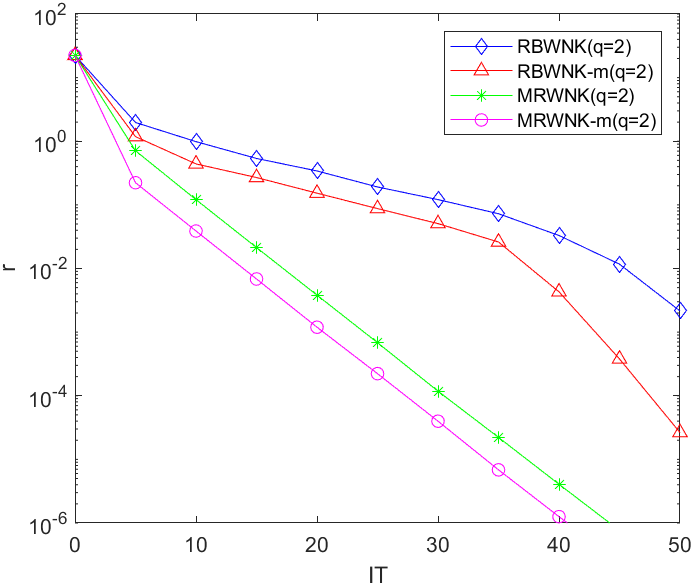}}
		\subfigure[$n=1000$]{	\includegraphics[scale=0.4]{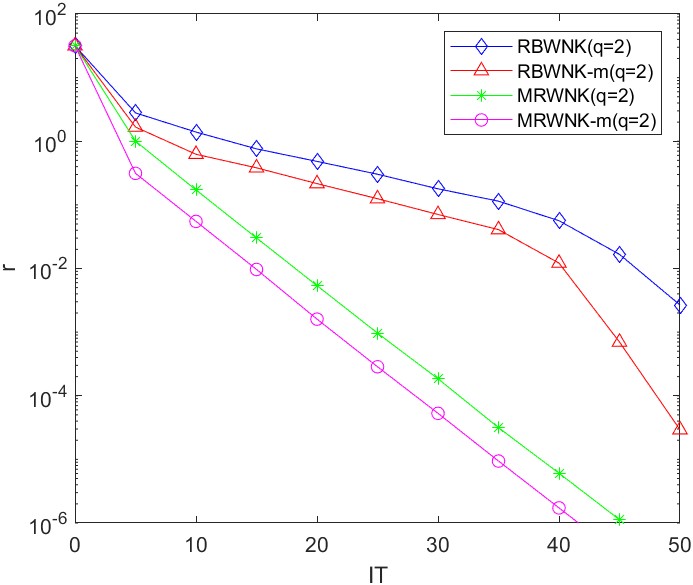}}\\
		\caption{The results of the H-equation with $m=n=100$, $500$, $1000$}
		\label{figure H}
	\end{figure}
	
	\begin{exm}
		NONDQUAR problem \cite{Luksan18}\label{ex:nondquar}
				\begin{align*}
			&f_k(x)=(0.5x_k-3)x_k+x_{k+1}-1=0, &k=1,\\
			&f_k(x)=(0.5x_k-3)x_k+x_{k-1}+x_{k+1}-1=0, &1<k<n,\\
			&f_k(x)=(0.5x_k-3)x_k+x_{k-1}-1=0, &k=n.
		\end{align*}
	\end{exm}
	We set the initial estimate $x_0=(-0.5,-0.5,...,-0.5)^T\in R^n$ and $n=200$, $400$, $800$, $\omega=0.7$ in RBWNK-m and $\omega=0.79$ in MRWNK-m. From Table \ref{tab:nondquar rho}, we let $\rho=0.3$ . From Fig.\ref{figure nondquar q} we observe that the value of $q$ has little effect on the convergence of RBWNK and RBWNK-m, while MRWNK and MRWNK-m are sensitive to the value of $q$. Here we study the four methods with $q=4$. Table \ref{tab:nondquar} shows that MRWNK has fewer IT than RBWNK, but MRWNK has longer CPU. When $n=200$ and $400$ although MRWNK-m has fewer IT than RBWNK-m, MRWNK-m has longer CPU. But when $n=800$ MRWNK-m has less CPU. Further more, RBWNK-m and MRWNK-m have fewer IT and CPU than RBWNK and MRWNK, which further validates the momentum term efficiency.

	\begin{table}
		\centering
		\caption{IT and CPU of different methods for NONDQUAR problem corresponding $q$}
		\label{tab:nondquar}
		\vspace{1mm}
		\begin{tabular}{cclll}
			\toprule
			Method&  $n$&$200$&$400$&$800$\\
			\midrule
			RBWNK&
			IT &$1368$&$2814$&$5856$\\
			($q=4$)&CPU &$0.0213$&$0.2147$&$1.7476$\\
			MRWNK&IT
			&$1161$&$2384$&$5157$\\
			($q=4$, $\rho=0.3$)&CPU &$0.0559$&$0.5588$&$3.3595$\\
			RBWNK-m&IT
			&$801$&$1752$&$4081$\\
			($q=4$, $\omega=0.7$)&CPU &$\mathbf{0.0117}$&$\mathbf{0.1038}$&$0.6773$\\
			MRWNK-m&IT
			&$\mathbf{685}$&$\mathbf{1131}$&$\mathbf{2357}$\\
			($q=4$, $\rho=0.3$, $\omega=0.79$)&CPU &$0.0131$&$0.1083$&$\mathbf{0.5303}$\\\bottomrule\end{tabular}
	\end{table}
	\begin{figure}[!t]
		\centering
		\subfigure[$n=200$]{
			\includegraphics[scale=0.35]{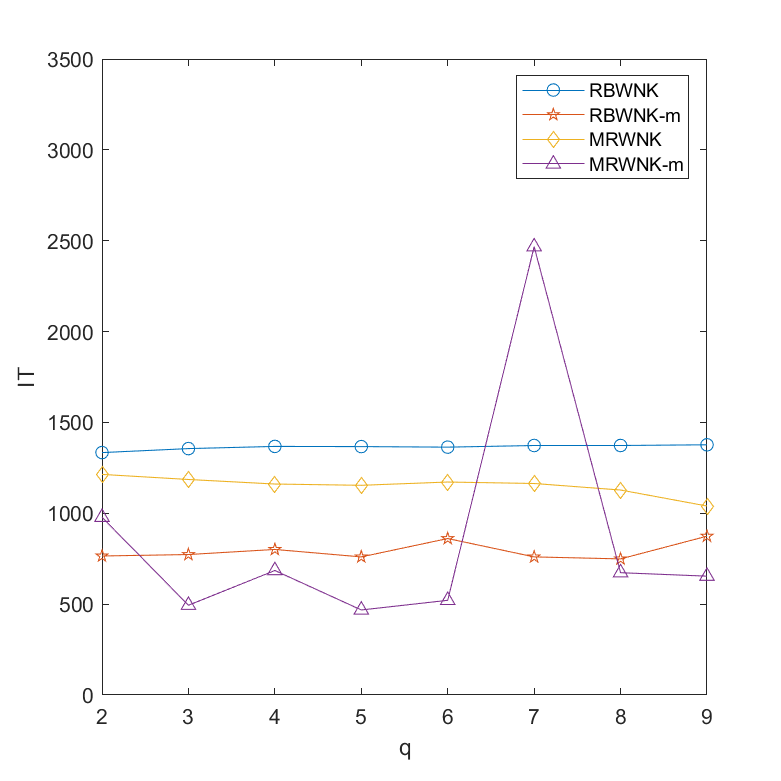}}
		\subfigure[$n=400$]{
			\includegraphics[scale=0.35]{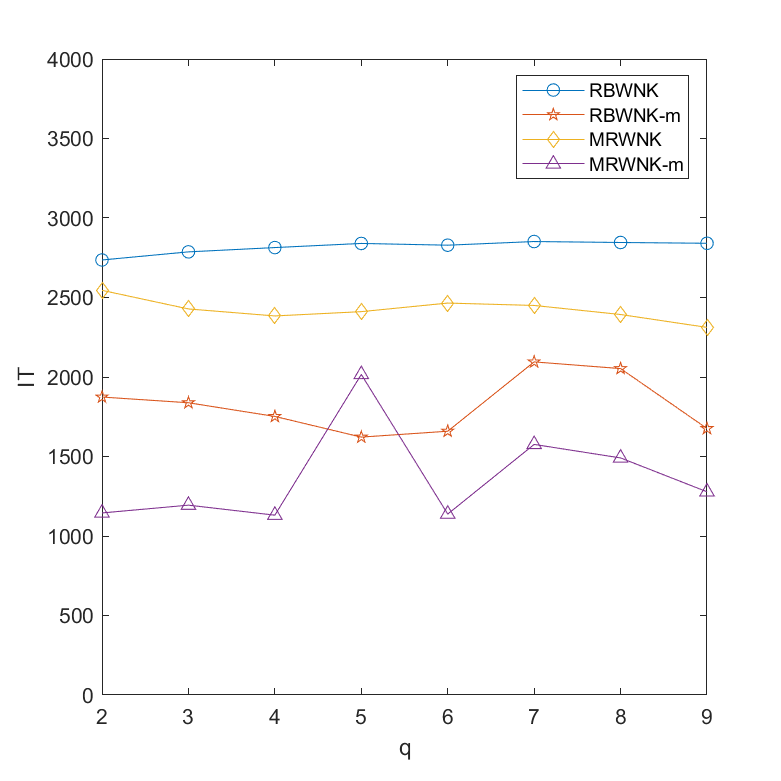}}
		\subfigure[$n=800$]{	\includegraphics[scale=0.35]{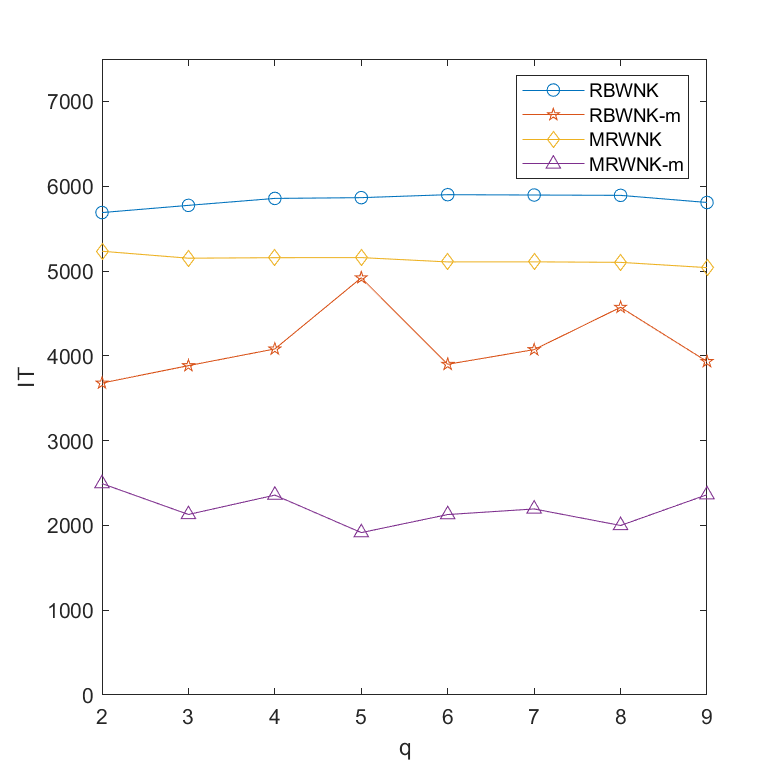}}\\
		\caption[d]{IT of  the four methods versus $q$ for nondquar problem with $m=n=200$, $400$, $800$}
		\label{figure nondquar q}
	\end{figure}
	
	\section{Conclusion}
	\label{sec5}
	
	Based on the RBWNK method and the greedy criterion of the MRABK method, we proposed two block Kaczmarz with momentum method: the RBWNK-m method and the MRWNK-m method. We note that RBWNK with $q=2$ is the NGABK and MRWNK-m with $q =2$, $\omega=0$ is the MRABK method. Under relatively mild conditions, the convergence rates of the new methods are analyzed. From numerical experiments, it is easy to see that MRWNK-m requires less IT and CPU compared to RBWNK, RBWNK-m, and MRWNK, and RBWNK-m requires less IT and CPU compared to RBWNK. Thus, these numerical experiments verified the efficiency of RBWNK-m and MRWNK-m. Meanwhile, we note that appropriate parameters $q$, $\omega$ and $\rho$ can significantly accelerate the convergence speed. In the future work, we will investigate how to obtain the optimal values of parameters.
	
\section*{Use of AI tools declaration}
The authors declare they have not used Artificial Intelligence (AI) tools in the creation of this article.

\section*{Conflict of interest}
The authors declare there is no conflict of interest.
\section*{Funding}
	 This work was supported by the National Natural Science Foundation of China [grant numbers
	42176011, 42374156], and the Fundamental Research Funds for the Central Universities of China [grant
	numbers 24CX03001A]

		\section*{Appendix}
	\begin{appendices}
		\section{The proof of E in Lemma \ref{lem}}\label{appendix A}
		\begin{align*}
			E
			&=-4\frac{\eta_k^Tf_{\tau_k}(x_k)}{\|f^{\prime}_{\tau_k}(x_k)^T\eta_k\|^2}\eta_k^Tf_{\tau_k}^{\prime}(x_k)(x_k-x^\dagger)\\
			&=-4\frac{\eta_k^Tf_{\tau_k}(x_k)}{\|f_{\tau_k}^{\prime}(x_k)^T\eta_k\|^2}\left(\sum_{i\in\tau_k}\eta_k^{(i)}f_i^\prime(x_k)\right)(x_k-x^\dagger)\\
			&=4\frac{\eta_k^Tf_{\tau_k}(x_k)}{\|f_{\tau_k}^{\prime}(x_k)^T\eta_k\|^2}\sum_{i\in\tau_k}\eta_k^{(i)}\left(f_i(x_k)-f_i(x^\dagger)-f_i^\prime(x_k)(x_k-x^\dagger)\right)-4\frac{\eta_k^T f_{\tau_k}(x_k)}{\|f_{\tau_k}^{\prime}(x_k)^T\eta_k\|^2}\sum_{i\in\tau_k}\eta_k^{(i)}f_i(x_k)\\
			&=F-4\frac{|\eta_k^Tf_{\tau_k}(x_k)|^2}{\|f_{\tau_k}^{\prime}(x_k)^T\eta_k\|^2}.
		\end{align*}
		When $q$ is even,
		\begin{align*}
			F
			&=4\frac{\eta_k^Tf_{\tau_k}(x_k)}{\|f_{\tau_k}^{\prime}(x_k)^T\eta_k\|^2}\sum_{i\in\tau_k}\eta_k^{(i)}\left(f_i(x_k)-f_i(x^\dagger)-f_i^\prime(x_k)(x_k-x^\dagger)\right)\\
			&=4\frac{\sum_{i\in\tau_k}f_i^{q-1}(x_k)f_i(x_k)}{\|f_{\tau_k}^{\prime}(x_k)^T\eta_k\|^2}\sum_{i\in\tau_k}f_i^{q-1}(x_k)\left(f_i(x_k)-f_i(x^\dagger)-f_i^\prime(x_k)(x_k-x^\dagger)\right)\\
			&\leq4\frac{\sum_{i\in\tau_k}f_i^{q}(x_k)}{\|f_{\tau_k}^{\prime}(x_k)^T\eta_k\|^2}\sum_{i\in\tau_k}\xi|f_i^{q-1}(x_k)||f_i(x_k)|\\
			&=4\xi\frac{\left(\sum_{i\in\tau_k}f_i^q(x_k)\right)^2}{\|f_{\tau_k}^{\prime}(x_k)^T\eta_k\|^2}\\
			&=4\xi\frac{\|f_{\tau_k}(x_k)\|_q^{2q}}{\|f_{\tau_k}^{\prime}(x_k)^T\eta_k\|^2}.
		\end{align*}
		The inequality makes use of Lemma \ref{qiexiangzhui}.\\
		When $q$ is odd,
		\begin{align*}
			F
			&=4\frac{\eta_k^Tf_{\tau_k}(x_k)}{\|f_{\tau_k}^{\prime}(x_k)^T\eta_k\|^2}\sum_{i\in\tau_k}\eta_k^{(i)}\left(f_i(x_k)-f_i(x^\dagger)-f_i^\prime(x_k)(x_k-x^\dagger)\right)\\
			&=4\frac{\sum_{i\in\tau_k}|f_i(x_k)|f_i^{q-2}(x_k)f_i(x_k)}{\|f_{\tau_k}^{\prime}(x_k)^T\eta_k\|^2}\sum_{i\in\tau_k}|f_i(x_k)|f_i^{q-2}(x_k)\left(f_i(x_k)-f_i(x^\dagger)-f_i^\prime(x_k)(x_k-x^\dagger)\right)\\
			&\leq4\frac{\sum_{i\in\tau_k}|f_i(x_k)|^q}{\|f_{\tau_k}^{\prime}(x_k)^T\eta_k\|^2}\sum_{i\in\tau_k}\xi|f_i(x_k)||f_i(x_k)^{q-2}||f_i(x_k)|\\
			&=4\xi\frac{\left(\sum_{i\in\tau_k}|f_i(x_k)|^q\right)^2}{\|f_{\tau_k}^{\prime}(x_k)^T\eta_k\|^2}\\
			&=4\xi\frac{\|f_{\tau_k}(x_k)\|_q^{2q}}{\|f^{\prime}(x_k)^T\eta_k\|^2}.
		\end{align*}
		The inequality makes use of \ref{qiexiangzhui}. \\
		 So $F\leq4\xi\frac{\|f_{\tau_k}(x_k)\|_q^{2q}}{\|f_{\tau_k}^{\prime}(x_k)^T\eta_k\|^2}$. And $|\eta_k^Tf_{\tau_k}(x_k)|^2=(\sum_{i\in\tau_k}|f_i(x_k)|^q)^2=\|f_{\tau_k}(x_k)\|_q^{2q}$. Then
		\begin{equation*}
			E\leq-4(1-\xi)\frac{\|f_{\tau_k}(x_k)\|_q^{2q}}{\|f_{\tau_k}^{\prime}(x_k)^T\eta_k\|^2}.
		\end{equation*}
		\section{The proof of \texorpdfstring{$\frac{\|f_{\tau_k}(x_k)\|_q^{2q}}{\|f_{\tau_k}^{\prime}(x_k)^T\eta_k\|^2}$}. in Lemma \ref{lem}}\label{appendix B}
		Since Assumption \ref{asm1} (i) is satisfied, then with the use of the properties of continuous functions on a bounded closed domain, there exists $\alpha>0$, such that $\alpha=
		\underset{1\leq i\leq \tau}{max}\underset{x\in  \mathcal{D}}{sup}\|f'_{i}(x)\|^2.$
    	Thus, we have
		\begin{equation}\label{lem1_frac}
			\frac{\|f_{\tau_k}(x_k)\|_q^{2q}}{\|f_{\tau_k}^{\prime}(x_k)^T\eta_k\|^2}\geq \frac{\|f_{\tau_k}(x_k)\|_q^{2q}}{\|f^{\prime}_{\tau_k}(x_k))\|_F^2\|\eta_k\|^2}\geq \frac{\|f_{\tau_k}(x_k)\|_q^{2q}}{|\tau_k|\alpha\|\eta_k\|^2}.	
		\end{equation}
		The last equation makes use of $\|\eta_k\|^2=\|f_{\tau_k}(x_k)\|_{2q-2}^{2q-2}$, which can be find in \cite{Ye24}.
		Using H\"{o}lder's inequality, we have
		\begin{equation*}
			\|f_{\tau_k}(x_k)\|_2^2\leq\|f_{\tau_k}(x_k)\|_q^2|\tau_k|^{\frac{q-2}{q}},
		\end{equation*}
		where $|\tau_k|$ is the cardinality of the selected block indices $\tau_k$.
		Thus we have
		\begin{equation}\label{holder}
			\|f_{\tau_k}(x_k)\|_q^{2q}\geq\|f_{\tau_k}(x_k)\|_2^{2q}|\tau_k|^{2-q}.
		\end{equation}
		What's more,
		\begin{equation}\label{lem1_2q-2}
			\|f_{\tau_k}(x_k)\|_{2q-2}^{2q-2}=\sum_{i\in\tau_k}f_i^{2q-2}(x_k)\leq\left(\sum_{i\in\tau_k}f_i^2(x_k)\right)^{q-1}=\|f_{\tau_k}(x_k)\|_2^{2q-2}.
		\end{equation}
		Then substituting (\ref{lem1_2q-2}) and (\ref{holder}) into (\ref{lem1_frac}), one can get that
		\begin{equation*}
			\frac{\|f_{\tau_k}(x_k)\|_q^{2q}}{\|f_{\tau_k}^{\prime}(x_k)^T\eta_k\|^2}\geq\frac{\|f_{\tau_k}(x_k)\|_2^{2q}|\tau_k|^{2-q}}{|\tau_k|\alpha\|f_{\tau_k}(x_k)\|_{2}^{2q-2}}=\frac{|\tau_k|^{1-q}}{\alpha}\|f_{\tau_k}(x_k)\|_2^2.
		\end{equation*}
		
	\end{appendices}

	\bibliographystyle{model1-num-names}
	\bibliography{reference1}

\begin{thebibliography}{20}
\expandafter\ifx\csname natexlab\endcsname\relax\def\natexlab#1{#1}\fi
\providecommand{\bibinfo}[2]{#2}
\ifx\xfnm\relax \def\xfnm[#1]{\unskip,\space#1}\fi
\bibitem[{Kaczmarz(1937)}]{k37}
\bibinfo{author}{S.~Kaczmarz},
\newblock \bibinfo{title}{Angen\"{a}herte aufl\"{o}sung von systemen linearer
  gleichungen, {B}ulletin international de l{'}{A}cad\'{e}mie polonaise des
  sciences et des lettres},
\newblock \bibinfo{journal}{Classe Sci. Math. Natl. Ser. A Sci. Math.}
  \bibinfo{volume}{35} (\bibinfo{year}{1937}) \bibinfo{pages}{355--357}.
\bibitem[{Cimmino(1938)}]{C38}
\bibinfo{author}{G.~Cimmino},
\newblock \bibinfo{title}{Calcolo approssimato per le soluzioni dei sistemi di
  equazioni lineari},
\newblock \bibinfo{journal}{La Ricerca Scientifica XVI, Series II, Anno IX}
  \bibinfo{volume}{1} (\bibinfo{year}{1938}) \bibinfo{pages}{326--333}.
\bibitem[{Gordon et~al.(1970)Gordon, Bender, and Herman}]{GBH70}
\bibinfo{author}{R.~Gordon}, \bibinfo{author}{R.~Bender},
  \bibinfo{author}{G.~T. Herman},
\newblock \bibinfo{title}{Algebraic reconstruction techniques ({ART}) for
  three-dimensional electron microscopy and {X}-ray photography},
\newblock \bibinfo{journal}{J. Theor. Biol.} \bibinfo{volume}{29}
  (\bibinfo{year}{1970}) \bibinfo{pages}{471--481}.
  \bibinfo{note}{\href{https://doi.org/10.1016/0022-5193(70)90109-8}{https://doi.org/10.1016/0022-5193(70)90109-8}}.
\bibitem[{Strohmer and Vershynin(2009)}]{Strohmer07}
\bibinfo{author}{T.~Strohmer}, \bibinfo{author}{R.~Vershynin},
\newblock \bibinfo{title}{A randomized {K}aczmarz algorithm with exponential
  convergence},
\newblock \bibinfo{journal}{J. Fourier. Anal. Appl.} \bibinfo{volume}{15}
  (\bibinfo{year}{2009}) \bibinfo{pages}{262–278}.
  \bibinfo{note}{\href{http://link.springer.com/10.1007/s00041-008-9030-4}{http://link.springer.com/10.1007/s00041-008-9030-4}}.
\bibitem[{Gower and Richt\'{a}rik(2015)}]{Gower15}
\bibinfo{author}{R.~Gower}, \bibinfo{author}{P.~Richt\'{a}rik},
\newblock \bibinfo{title}{Randomized iterative methods for linear systems},
\newblock \bibinfo{journal}{SIAM. J. Matrix. Anal. Appl.} \bibinfo{volume}{36}
  (\bibinfo{year}{2015}) \bibinfo{pages}{1660–1690}.
  \bibinfo{note}{\href{https://doi.org/10.1137/15M1025487}{https://doi.org/10.1137/15M1025487}}.
\bibitem[{Bai and Wu(2018)}]{Bai18}
\bibinfo{author}{Z.~Bai}, \bibinfo{author}{W.~Wu},
\newblock \bibinfo{title}{On greedy randomized {K}aczmarz method for solving
  large sparse linear systems},
\newblock \bibinfo{journal}{SIAM. J. Sci. Comput.} \bibinfo{volume}{40}
  (\bibinfo{year}{2018}) \bibinfo{pages}{A592–A606}.
  \bibinfo{note}{\href{http://dx.doi.org/10.1137/17m1137747}{http://dx.doi.org/10.1137/17m1137747}}.
\bibitem[{Niu and Zheng(2018)}]{Niu20}
\bibinfo{author}{Y.~Niu}, \bibinfo{author}{B.~Zheng},
\newblock \bibinfo{title}{A greedy block {K}aczmarz algorithm for solving
  large-scale linear systems},
\newblock \bibinfo{journal}{Appl. Math. Lett.} \bibinfo{volume}{104}
  (\bibinfo{year}{2018}) \bibinfo{pages}{Article ID: 106294}.
  \bibinfo{note}{\href{https://doi.org/10.1016/j.aml.2020.106294}{https://doi.org/10.1016/j.aml.2020.106294}}.
\bibitem[{Tan et~al.(2025)Tan, Guo, Deng, and Chen}]{Tan25}
\bibinfo{author}{L.~Tan}, \bibinfo{author}{X.~Guo}, \bibinfo{author}{M.~Deng},
  \bibinfo{author}{J.~Chen},
\newblock \bibinfo{title}{On the adaptive deterministic block {K}aczmarz method
  with momentum for solving large-scale consistent linear systems},
\newblock \bibinfo{journal}{J. Comput. Appl. Math.} \bibinfo{volume}{457}
  (\bibinfo{year}{2025}) \bibinfo{pages}{Article ID: 116328}.
  \bibinfo{note}{\href{https://doi.org/10.1016/j.cam.2024.116328}{https://doi.org/10.1016/j.cam.2024.116328}}.
\bibitem[{Chen and Huang(2022)}]{CH22}
\bibinfo{author}{J.~Chen}, \bibinfo{author}{Z.~Huang},
\newblock \bibinfo{title}{On a fast deterministic block {K}aczmarz method for
  solving large-scale linear systems},
\newblock \bibinfo{journal}{Numer. Algorithms} \bibinfo{volume}{89}
  (\bibinfo{year}{2022}) \bibinfo{pages}{1007--1029}.
  \bibinfo{note}{\href{https://doi.org/10.1007/s11075-021-01143-4}{https://doi.org/10.1007/s11075-021-01143-4}}.
\bibitem[{Wu et~al.(2022)Wu, Cui, and Zuo}]{WCZ22}
\bibinfo{author}{N.~Wu}, \bibinfo{author}{L.~Cui}, \bibinfo{author}{Q.~Zuo},
\newblock \bibinfo{title}{On the relaxed greedy deterministic row and column
  iterative methods},
\newblock \bibinfo{journal}{Appl. Math. Comput.} \bibinfo{volume}{432}
  (\bibinfo{year}{2022}) \bibinfo{pages}{127339}.
  \bibinfo{note}{\href{https://doi.org/10.1016/j.amc.2022.127339}{https://doi.org/10.1016/j.amc.2022.127339}}.
\bibitem[{Wang et~al.(2021)Wang, Li, and Bao}]{Wang21}
\bibinfo{author}{Q.~Wang}, \bibinfo{author}{W.~Li}, \bibinfo{author}{W.~Bao},
\newblock \bibinfo{title}{Nonlinear {K}aczmarz algorithms and their
  convergence},
\newblock \bibinfo{journal}{J. Comput. Appl. Math.} \bibinfo{volume}{399}
  (\bibinfo{year}{2021}) \bibinfo{pages}{Article ID: 113720}.
  \bibinfo{note}{\href{https://doi.org/10.1016/j.cam.2021.113720}{https://doi.org/10.1016/j.cam.2021.113720}}.
\bibitem[{Zhang et~al.(2023)Zhang, Wang, and Zhao}]{Zhang23}
\bibinfo{author}{J.~Zhang}, \bibinfo{author}{Y.~Wang},
  \bibinfo{author}{J.~Zhao},
\newblock \bibinfo{title}{On maximum residual nonlinear {K}aczmarz-type
  algorithms for large nonlinear systems of equations},
\newblock \bibinfo{journal}{J. Comput. Appl. Math.} \bibinfo{volume}{452}
  (\bibinfo{year}{2023}) \bibinfo{pages}{Article ID: 115065}.
  \bibinfo{note}{\href{https://doi.org/10.1016/j.cam.2023.115065}{https://doi.org/10.1016/j.cam.2023.115065}}.
\bibitem[{Lv et~al.(2024)Lv, Xing, Bao, Li, and Guo}]{Lv24}
\bibinfo{author}{Y.~Lv}, \bibinfo{author}{L.~Xing}, \bibinfo{author}{W.~Bao},
  \bibinfo{author}{W.~Li}, \bibinfo{author}{Z.~Guo},
\newblock \bibinfo{title}{A class of pseudoinverse-free greedy block nonlinear
  {K}aczmarz methods for nonlinear systems of equations},
\newblock \bibinfo{journal}{Netw. Heterog. Media.} \bibinfo{volume}{19}
  (\bibinfo{year}{2024}) \bibinfo{pages}{305--323}.
  \bibinfo{note}{\href{https://www.aimspress.com/article/doi/10.3934/nhm.2024014}{https://www.aimspress.com/article/doi/10.3934/nhm.2024014}}.
\bibitem[{Ye and Yin(2024)}]{Ye24}
\bibinfo{author}{Y.~Ye}, \bibinfo{author}{J.~Yin},
\newblock \bibinfo{title}{A residual-based weighted nonlinear {K}aczmarz method
  for solving nonlinear systems of equations},
\newblock \bibinfo{journal}{Comput. Appl. Math.} \bibinfo{volume}{43}
  (\bibinfo{year}{2024}) \bibinfo{pages}{Article Number: 276}.
  \bibinfo{note}{\href{https://link.springer.com/article/10.1007/s40314-024-02797-1}{https://link.springer.com/article/10.1007/s40314-024-02797-1}}.
\bibitem[{Liu et~al.(2024)Liu, Li, Xing, and Bao}]{Liu24}
\bibinfo{author}{L.~Liu}, \bibinfo{author}{W.~Li}, \bibinfo{author}{L.~Xing},
  \bibinfo{author}{W.~Bao},
\newblock \bibinfo{title}{Greedy randomized {K}aczmarz with momentum method for
  nonlinear equation},
\newblock \bibinfo{journal}{J. Comput. Appl. Math.} \bibinfo{volume}{459}
  (\bibinfo{year}{2024}) \bibinfo{pages}{Article ID:116359}.
  \bibinfo{note}{\href{https://doi.org/10.1016/j.cam.2024.116359}{https://doi.org/10.1016/j.cam.2024.116359}}.
\bibitem[{Loizou and Richt\'{a}rik(2020)}]{Nicolas20}
\bibinfo{author}{N.~Loizou}, \bibinfo{author}{P.~Richt\'{a}rik},
\newblock \bibinfo{title}{Momentum and stochastic momentum for stochastic
  gradient, newton, proximal point and subspace descent methods},
\newblock \bibinfo{journal}{Comput. Optim. Appl.} \bibinfo{volume}{77}
  (\bibinfo{year}{2020}) \bibinfo{pages}{653–710}.
  \bibinfo{note}{\href{https://link.springer.com/article/10.1007/s10589-020-00220-z}{https://link.springer.com/article/10.1007/s10589-020-00220-z}}.
\bibitem[{Xing et~al.(2025)Xing, Li, and Bao}]{Xing25}
\bibinfo{author}{L.~Xing}, \bibinfo{author}{W.~Li}, \bibinfo{author}{W.~Bao},
\newblock \bibinfo{title}{Comments on the nonlinear randomized {K}aczmarz
  algorithms and their convergence},
\newblock \bibinfo{journal}{J. Comput. Appl. Math.}  (\bibinfo{year}{2025}).
  \bibinfo{note}{Revision}.
\bibitem[{Zhang et~al.(2024)Zhang, Li, and Tang}]{Zhang24}
\bibinfo{author}{Y.~Zhang}, \bibinfo{author}{H.~Li}, \bibinfo{author}{L.~Tang},
\newblock \bibinfo{title}{Greedy randomized sampling nonlinear {K}aczmarz
  methods},
\newblock \bibinfo{journal}{Calcolo} \bibinfo{volume}{61}
  (\bibinfo{year}{2024}) \bibinfo{pages}{Article Number: 25}.
  \bibinfo{note}{\href{https://link.springer.com/article/10.1007/s10092-024-00577-1}{https://link.springer.com/article/10.1007/s10092-024-00577-1}}.
\bibitem[{Gomes-Ruggiero et~al.(1992)Gomes-Ruggiero, Mart\'{i}nez, and
  Moretti}]{Gomes92}
\bibinfo{author}{M.~Gomes-Ruggiero}, \bibinfo{author}{J.~Mart\'{i}nez},
  \bibinfo{author}{A.~Moretti},
\newblock \bibinfo{title}{Comparing algorithms for solving sparse nonlinear
  systems of equations},
\newblock \bibinfo{journal}{SIAM J. Sci. Stat. Comput.} \bibinfo{volume}{13}
  (\bibinfo{year}{1992}) \bibinfo{pages}{459–483}.
  \bibinfo{note}{\href{https://doi.org/10.1137/0913025}{https://doi.org/10.1137/0913025}}.
\bibitem[{Luksan et~al.(2018)Luksan, Vl\v{c}ek, and Matonoha}]{Luksan18}
\bibinfo{author}{L.~Luksan}, \bibinfo{author}{J.~Vl\v{c}ek},
  \bibinfo{author}{C.~Matonoha},
\newblock \bibinfo{title}{Problems for nonlinear least squares and nonlinear
  equations},
\newblock \bibinfo{journal}{ICS, ASCR‌‌} \bibinfo{volume}{1259}
  (\bibinfo{year}{2018}).
  \bibinfo{note}{\href{http://dx.doi.org/10.13140/RG.2.2.16112.10248}{http://dx.doi.org/10.13140/RG.2.2.16112.10248}}.

\end{thebibliography}
	
\end{document}